\documentclass{aims}
\usepackage{amsmath}
  \usepackage{paralist}
  \usepackage{graphics} 
  \usepackage{epsfig} 
\usepackage{graphicx}  \usepackage{epstopdf}
 \usepackage[colorlinks=true]{hyperref}
\usepackage{tikz}
\usetikzlibrary{decorations.markings}
\hypersetup{urlcolor=blue, citecolor=red}

  \textheight=8.2 true in
   \textwidth=5.0 true in
    \topmargin 30pt
     \setcounter{page}{1}



\newtheorem{theorem}{Theorem}[section]

\newtheorem{lemma}[theorem]{Lemma}

\theoremstyle{definition}

\newtheorem{remark}{Remark}

\newtheorem{hypothesis}{Hypothesis}[section]

\title[Acetabularia Model] 
      {Dynamic Transitions and Stability for the Acetabularia Whorl Formation}

\author[Yiqiu Mao, Dongming Yan and ChunHsien Lu]{}

\subjclass{Primary: 35Q92, 37L10; Secondary: 35K57, 35G60.}
 \keywords{dynamic transition, Hopf bifurcation, Acetabularia, reaction diffusion equation, steady state bifurcation, center manifold.}

 \email{ymao@indiana.edu}
 \email{dmyan@zufe.edu.cn}
 \email{lu58@umail.iu.edu}


\thanks{$^*$ Corresponding author: Yiqiu Mao}

\begin{document}
\maketitle

\centerline{\scshape Yiqiu Mao$^*$}
\medskip
{\footnotesize
 \centerline{Department of Mathematics}
   \centerline{Indiana University}
   \centerline{ Bloomington,IN 47405, USA}
} 

\medskip

\centerline{\scshape Dongming Yan}
\medskip
{\footnotesize
 \centerline{ School of Data Sciences}
   \centerline{Zhejiang University of Finance $\&$ Economics}
   \centerline{Hangzhou, 310018, PR China}
}

\medskip

\centerline{\scshape ChunHsien Lu}
\medskip
{\footnotesize
	\centerline{Department of Mathematics}
	\centerline{Indiana University}
	\centerline{ Bloomington,IN 47405, USA}
}
\bigskip

 \centerline{(Communicated by the associate editor name)}

\begin{abstract}
Dynamical transitions of the Acetabularia whorl formation caused by outside calcium concentration is carefully analyzed using a chemical reaction diffusion model on a thin annulus. Restricting ourselves with Turing instabilities, we found all three types of transition, continuous, catastrophic and random can occur under different parameter regimes. Detailed linear analysis and numerical investigations are also provided. The main tool used in the transition analysis is Ma \& Wang's dynamical transition theory including the center manifold reduction. 
\end{abstract}

\section{Introduction}	
Acetabularia acetabulum is an important species for studying the morphogenesis during development for a single cell organism. In this article we will focus on the vegetative whorl generation along the stalk. Biological details of the Acetabularia development can be found in various review articles, for example \cite{dumais2000whorl}\cite{dumais2000acetabularia}.

Several different models have been put forward to explain the whorl generation phenomenon. We note here since Acetabularia is a single cell organism and evidence has been shown that without nucleus the stem is still capable of growing a cap \cite{goodwin1994leopard}, the whorl generation is largely a local dynamics involving chemical reactions and cytoskeleton hence hope is given for a simple model with differential equations not involving genetics or intracellular signaling. Chemical as well as mechanical models \cite{martynov1975morphogenetic}\cite{goodwin1985tip}\cite{harrison1992reaction} have been proposed. 

In this article we will apply the model used by Murray \cite{murray2001mathematical} , which is a purely chemical model and a reaction diffusion equation involving two reactants, and the pattern formation is driven by the Turing instability of the system. Several observations have been shown to support the kinematic nature of our model: The spacing of the hair is affected by temperature \cite{harrison1981hair}, the spatial pattern of whorl hair is related to outside calcium concentration \cite{goodwin1984calcium}, and the whorl pattern is initiated simultaneously.

We will focus ourselves here with the effect of calcium concentration on the initiation of the whorl pattern. Experiments have shown \cite{goodwin1994leopard} that there is a threshold of calcium concentration of about $2$mM for the formation of whorls. Interestingly for external calcium above about 60mM, whorls will stop forming once again. Both facts can be shown in our model and details of the transition from stable stem to whorl formation when Calcium concentration crosses a critical value are explicitly calculated, using dynamical transition theory developed by Ma and Wang\cite{ma2005bifurcation}\cite{ma2014phase}.

More specifically, we obtain a specific interval of calcium concentration for the whorl pattern to occur. Then we proceed to classify the onset of pattern formation into three types, corresponding to the three types in Ma \& Wang's theory. We have shown that all three kinds of transition type can occur though in the case of thin stem wall only continuous type and catastrophic type can happen. We also show in the numerical section that the number of whorl hairs generated during transition can be quite irregular with underlying parameters. 

Besides the purpose of investigating the Acetabularia whorl formation, this paper can be viewed as an example for exploring dynamic behavior for reaction diffusion equations in an annular region. 

The article is organized as following: the modeling background is introduced in Section 2, along with the deviation equation and functional setup and a discussion of global existence; in Section 3 we analyze the linear problem, from properties of Bessel functions we derive the crossing eigenvectors must be two dimensional when stem walls are thin, exact classifications of parameter space are provided, along with the principle of exchange of stabilities; in Section 4 we focus on the dynamical transitions of the system and prove three types of transitions can occur in different cases; in Section 5 we use numerical tools to investigate the transition types and properties of critical eigenvectors; in Section 6 we derive some physical conclusions from our research.

\section{ Model equations } \label{2}
In this paper, we apply a model proposed by Murray \cite{murray2001mathematical}, which is an adaptation of a simple two species mechanism, the Schnackenberg (1979) system. The equations state as follows: 
\begin{align}
\left\{\begin{array}{ll}
\frac{\partial A}{\partial t}
=D_A\Delta A+k_1-k_2A+k_3A^2 B \\
\frac{\partial B}{\partial t}=D_B\Delta B+k_4-k_3A^2B.
\end{array}\right.
\end{align}
Where all $k$'s and $D_A,D_B$ are all positive parameters, $A$ and $B$ are functions of $r,\theta$ (or $\mathbf{x}$) and $t$ with the annulus domain defined by 
$$R_i\leq r\leq R_0, \quad 0\leq \theta < 2\pi.$$  $A,B$ define the density of two substances inside the annular growth region at the tip of mature Acetabularia. Here we assumed a reaction $2A+B\xrightarrow{k_3}3A$ took place. In our context, $B$ is calcium, and $A$ could be a molecule that is increased in the calcium presence but are constantly transformed to other substances at a rate of $k_2$. And they are generated by constant rate $k_1, k_4$ respectively for $A$ and $B$. We use the non-dimensionalization as follows:
\begin{align*}
&u=A\left(\frac{k_3}{k_2}\right)^{1/2},v=B\left(\frac{k_3}{k_2}\right)^{1/2}, t^*=\frac{D_A t}{R_i^2},\quad \mathbf{x}^*=\frac{\mathbf{x}}{R_i},\\
&d=\frac{D_B}{D_A},\quad a=\frac{k_1}{k_2}\left(\frac{k_3}{k_2}\right)^{1/2}, \lambda=\frac{k_4}{k_2}\left(\frac{k_3}{k_2}\right)^{1/2},R^2=\frac{R_i^2k_2}{D_A}.
\end{align*}

Then we have the reaction diffusion system after omitting the stars:
\begin{align}\label{cs}
\left\{\begin{array}{ll}
\frac{\partial u}{\partial t}
=\Delta u+R^2(a-u+u^2v) &\text{in}~~\Omega,\\
\frac{\partial v}{\partial t}=d\Delta v+R^2(\lambda-u^2v) &\text{in}~~\Omega,\\
u_r=v_r=0 &\text{on}~~\partial\Omega,
\end{array}\right.
\end{align}
where
the Laplacian
$$\Delta=\frac{\partial^2}{\partial r^2}
+\frac{1}{r}\frac{\partial}{\partial r}
+\frac{1}{r^2}\frac{\partial^2}{\partial \theta^2},$$
and ~$\Omega$ is the annular domain (we denote $\delta=\frac{R_0}{R_i}>1$)
$$1\leq r\leq \delta,~0\leq \theta<2\pi.$$ The $\delta$ is assumed to be very close to 1 as the stalks of Acetabularia have very thin walls. 

The inward flux constant $\lambda$ is directly proportional to outside calcium concentration because the normal intracellular concentration of calcium is extremely low compared to outside. Hence we will use $\lambda$ as a principle parameter and investigate the transition when the increase of $\lambda$ causes instability of the steady state.

It is easy to see that
\begin{align}\label{steady}
u_0=a+\lambda,~v_0=\frac{\lambda}{(a+\lambda)^2}
\end{align}
is a steady state of \eqref{cs}.
Consider deviation of \eqref{cs} from \eqref{steady}. To do so, we take
\begin{align}\label{deviation}
(u,~v)=(u'
+u_0,~
v'+v_0).
\end{align}
Dropping the primes, the system \eqref{cs} is then transformed into
\begin{align}\label{csdeviation}
\left\{\begin{array}{ll}
\frac{\partial u}{\partial t}
=~\Delta u+R^2\Big[\frac{\lambda-a}{a+\lambda}u
+(a+\lambda)^2v
+\frac{\lambda}{(a+\lambda)^2}u^2
+2(a+\lambda)uv+u^2v\Big],\\
\frac{\partial v}{\partial t}=d\Delta v-R^2\Big[\frac{2\lambda}{a+\lambda}u
+(a+\lambda)^2v+\frac{\lambda}{(a+\lambda)^2}u^2
+2(a+\lambda)uv+u^2v\Big],\\
u_r=v_r=0,~~r=1,\delta.
\end{array}\right.
\end{align}

Next, we convert the equation  \eqref{csdeviation}  into an abstract functional setting which is standard in the framework of dynamic transitions.
First, we let $X$ and $X_{1}$ be function spaces defined as follows:
\begin{align*}
&X~=L^2(\Omega)\times L^2(\Omega) ,\\&X_1=\Big\{(u,v)\in H^2(\Omega)\times H^2(\Omega)~\big|
~u_r=v_r=0~\text{on}~
r=1,\delta \Big\}.
\end{align*}
Then,
we define the operators $L_{\lambda}=-A+B_{\lambda}: X_1\rightarrow X$ and $G: X^{1/2}\to X$ ($X^{1/2}$ to be explained later) by
\begin{align}\label{21}
-A(U)=\left(\begin{array}{l}~\Delta u\\
d\Delta v
\end{array}\right),
~~~~
B_{\lambda}(U)=\left(\begin{array}{l}
~~R^2\frac{\lambda-a}{a+\lambda}u
+R^2(a+\lambda)^2v\\-R^2\frac{2\lambda}{a+\lambda}u-R^2(a+\lambda)^2v
\end{array}\right),
\end{align}
\begin{align}\label{22}\begin{array}{ll}
G(U)&=\left(\begin{array}{l}
\ \  R^2\big[\frac{\lambda}{(a+\lambda)^2}u^2
+2(a+\lambda)uv+u^2v\big]\\
-R^2\big[
\frac{\lambda}{(a+\lambda)^2}u^2
+2(a+\lambda)uv+u^2v\big]
\end{array}\right)\\
&=:\left(\begin{array}{l}
\ \ \ \gamma_{2}u^2+\gamma_{12}uv
+\gamma_3u^2v\\
-(\gamma_{2}u^2+\gamma_{12}uv
+\gamma_3u^2v
)\end{array}\right),
\end{array}
\end{align}
where $U=(u,v)$.

We thus obtain the equivalent operator equation of the evolution equation \eqref{csdeviation} as follows
\begin{equation}
\begin{aligned}\label{231}
&\frac{\text{d}U}{\text{d}t}=L_{\lambda} U+G(U),\\&U(0)=U_0.
\end{aligned}
\end{equation}
It is easy to see that $L_{\lambda}$ is a completely continuous field, meaning it's the sum of a linear homeomorphism $A: X_1\rightarrow X$ (with graph norm on $X_1$) and a compact operator $B_\lambda: X_1\rightarrow X$, which insures relatively nice spectral property for $L_\lambda$, see \cite{ma2005bifurcation} Chap 3. 

Since $H^1(\Omega)^2=X^{1/2}$ is continuously imbedded in $L^6$, thus it's also easy to check from \eqref{22} that the cubic $G$ is locally Lipschitz continuous from $X^{1/2}$ to $X$ which guarantees the local existence. Moreover $G(U)=o(||U||_{X^{1/2}})$ and is the sum of several continuous multilinear operators from $X_1$ to $X$. Hence the requirement on P527 of \cite{ma2014phase} is satisfied.

In \cite{you2007global}, Y. You proved a global attractor (hence global existence) exists for this equation for any initial value $U_0\in X$ with Dirichlet boundary conditions.

\section{Eigenvalues of Linearized Problem and Principle of
	exchange of stability (PES) }

The linear eigenvalue equations of \eqref{csdeviation} are given by
\begin{align}\label{ep}
\left\{\begin{array}{ll}
~\Delta u+R^2\Big[\frac{\lambda-a}{a+\lambda}u
+(a+\lambda)^2v\Big]
=\beta(\lambda) u,\\
d\Delta v-R^2\Big[\frac{2\lambda}{a+\lambda}u
+(a+\lambda)^2v\Big]=\beta(\lambda) v,\\
u_r=v_r=0,~~r=1,\delta.
\end{array}\right.
\end{align}

We first consider the eigenvalue problem \eqref{ep}.

Suppose the eigenvalue equation 
\begin{equation*}
\Delta u = \gamma u
\end{equation*}
with Neumann boundary condition in a annulus has following solutions: 
\begin{equation}
\label{laplacian eigenvalue problem on a disk}
\Delta B_k=-\lambda_k B_k
\end{equation} where $0=\lambda_0<\lambda_1\leq\lambda_2\leq...$.\\By using separation of variables $B=R(r)\Theta(\theta)$ in \eqref{laplacian eigenvalue problem on a disk}, it can be rewritten as: 
\begin{align}
&\Theta=\begin{cases}
1, \quad n=0,\\
\sin{n\theta} \text{ or }\cos{n\theta}, \quad n\geq 1.
\end{cases}\\
&R''+r^{-1}R'+(\lambda_k-\frac{n^2}{r^2})R=0,\quad R'(1)=R'(\delta)=0.\label{radial eigenvalue equation}
\end{align}
Hence the solution for $R$'s are Bessel functions $$R=J_n(\sqrt{\lambda_k}r)Y_n'(\sqrt{\lambda_k})-J_n'(\sqrt{\lambda_k})Y_n(\sqrt{\lambda_k}r)$$ and $\lambda_k$ must satisfies (except for $\lambda_k=0$ when $n=0$: 
\begin{equation}\label{bessel investigation}
J_n'(\sqrt{\lambda_k}\delta)Y_n'(\sqrt{\lambda_k})-J_n'(\sqrt{\lambda_k})Y_n'(\sqrt{\lambda_k}\delta)=0.
\end{equation}
And we have the following lemmas regarding the distribution of eigenvalues as $\delta$ goes to 1. 
\begin{lemma}
	\label{asymptotic delta}
	For any fixed $n$, the solution $\lambda_k$ to \eqref{radial eigenvalue equation} can be represented as $\lambda_{n,j}$, where $j=1,2,3,...$ and $$0\leq\lambda_{n,1}<\lambda_{n,2}<\lambda_{n,3}<...$$ and $\lambda_{n,j}\rightarrow \infty$ as $j\rightarrow \infty$. While for each fixed $j$, we have $$\lambda_{n,j}\rightarrow 
	\begin{cases}
	n^2,&\text{if}\ j=1,\\
	\infty,&\text{if}\ j>1,
	\end{cases}$$ as $\delta\rightarrow 1$. Also $\lambda_{n,1}>n^2/\delta^2$ for any $\delta>1$.
\end{lemma}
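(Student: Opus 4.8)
The plan is to recast the radial problem \eqref{radial eigenvalue equation} as a regular Sturm--Liouville eigenvalue problem and then combine standard Sturm--Liouville theory with variational (Rayleigh quotient) estimates. Multiplying \eqref{radial eigenvalue equation} by $r$ puts it in self-adjoint form
\begin{equation*}
-(rR')' + \frac{n^2}{r}R = \lambda_{n,j}\, r R, \qquad R'(1) = R'(\delta) = 0,
\end{equation*}
which is a regular Sturm--Liouville problem on $[1,\delta]$ with positive coefficient $p(r)=r$, positive weight $w(r)=r$, and continuous nonnegative potential $q(r)=n^2/r$, all bounded away from $0$ and $\infty$ on the compact interval. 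Standard Sturm--Liouville theory then immediately yields the first assertion: for each fixed $n$ the eigenvalues are real, simple, and form a strictly increasing sequence $\lambda_{n,1}<\lambda_{n,2}<\cdots\to\infty$, the eigenfunction of $\lambda_{n,j}$ having exactly $j-1$ interior zeros.

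For the quantitative statements I would use the min--max characterization
\begin{equation*}
\lambda_{n,j} = \min_{\dim V = j}\ \max_{0\neq R\in V}\ \frac{\displaystyle\int_1^\delta\Big(r(R')^2 + \frac{n^2}{r}R^2\Big)\,dr}{\displaystyle\int_1^\delta r R^2\,dr}.
\end{equation*}
The lower bound $\lambda_{n,1}>n^2/\delta^2$ (for $n\ge 1$; for $n=0$ one has equality $\lambda_{0,1}=0$) follows from the pointwise inequality $\tfrac{n^2}{r} > \tfrac{n^2}{\delta^2}\,r$, valid for $1\le r<\delta$: applied inside the Rayleigh quotient for the first eigenfunction $R_1$ it gives $\int_1^\delta \tfrac{n^2}{r}R_1^2\,dr > \tfrac{n^2}{\delta^2}\int_1^\delta rR_1^2\,dr$, and since the energy term $\int_1^\delta r(R_1')^2\,dr$ is nonnegative the strict inequality is preserved. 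For the limit $\lambda_{n,1}\to n^2$ I would pair this with the upper bound obtained by inserting the constant test function $R\equiv 1$,
\begin{equation*}
\lambda_{n,1} \le \frac{\int_1^\delta \frac{n^2}{r}\,dr}{\int_1^\delta r\,dr} = \frac{2n^2\ln\delta}{\delta^2-1},
\end{equation*}
and then observe that both $n^2/\delta^2$ and $2n^2\ln\delta/(\delta^2-1)$ converge to $n^2$ as $\delta\to 1$ (using $\ln\delta\sim\delta-1$ and $\delta^2-1\sim 2(\delta-1)$), so $\lambda_{n,1}\to n^2$ by squeezing.

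Finally, for $j>1$ I expect the main work: to show $\lambda_{n,j}\to\infty$ as $\delta\to1$ I would discard the nonnegative potential and use the weight bounds $1\le r\le\delta$ to compare with the flat Neumann problem, obtaining for every admissible $R$
\begin{equation*}
\frac{\int_1^\delta\big(r(R')^2 + \frac{n^2}{r}R^2\big)\,dr}{\int_1^\delta r R^2\,dr} \ \ge\ \frac{1}{\delta}\,\frac{\int_1^\delta (R')^2\,dr}{\int_1^\delta R^2\,dr}.
\end{equation*}
Since this holds pointwise in $R$ it survives the min--max, giving $\lambda_{n,j}\ge \tfrac{1}{\delta}\,\nu_j$, where $\nu_j=(j-1)^2\pi^2/(\delta-1)^2$ is the $j$-th Neumann eigenvalue of $-R''$ on the interval $(1,\delta)$ of length $\delta-1$. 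For $j\ge 2$ the factor $\nu_j$ blows up like $(\delta-1)^{-2}$ while $1/\delta\to1$, forcing $\lambda_{n,j}\to\infty$. The delicate point throughout is keeping the variational comparisons pointing in the correct direction and justifying the interior-zero count, which is exactly where the regular Sturm--Liouville structure of the self-adjoint form does the heavy lifting.
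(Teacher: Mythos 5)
Your proposal is correct, but it proves the lemma by a genuinely different route from the paper. You put the radial equation in self-adjoint Sturm--Liouville form and run everything through the Courant--Fischer min--max principle: the pointwise comparison $n^2/r > (n^2/\delta^2)\,r$ gives the strict lower bound $\lambda_{n,1}>n^2/\delta^2$, the constant test function gives the matching upper bound $2n^2\ln\delta/(\delta^2-1)\to n^2$, and the comparison $\lambda_{n,j}\ge \delta^{-1}(j-1)^2\pi^2/(\delta-1)^2$ with the flat Neumann problem forces $\lambda_{n,j}\to\infty$ for $j\ge 2$. All three comparisons point the right way and the form domains match, so the argument goes through. The paper instead works directly with the explicit Bessel solutions: it establishes that the zeros of $J_n'$ and $Y_n'$ alternate and exceed $n$, computes $\bigl(J_n'/Y_n'\bigr)'=2(n^2-x^2)/\bigl(\pi x^3 (Y_n')^2\bigr)$, and reads off the location of the roots of the cross-product condition \eqref{bessel investigation} from the graph of $J_n'/Y_n'$; this is how it localizes $\lambda_{n,1}$ in $(n^2/\delta^2,n^2)$ and pushes $\lambda_{n,2}$ to infinity via the spacing of the zeros. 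Your variational argument is more elementary and self-contained (no facts from Watson's treatise are needed), it yields the strict bound $\lambda_{n,1}>n^2/\delta^2$ for every $\delta>1$ cleanly, and in fact your two-sided bounds $n^2/\delta^2<\lambda_{n,1}\le 2n^2\ln\delta/(\delta^2-1)<n^2$ recover, for all $\delta>1$, the localization the paper only asserts for $\delta$ near $1$. What the paper's special-function approach buys instead is explicit information about the interlacing of the zeros of the cross-product $J_n'(x\delta)Y_n'(x)-J_n'(x)Y_n'(x\delta)$, which feeds into Hypothesis 3.1 on simplicity of the $\lambda_{n,j}$ and into the later numerical discussion; your method does not see that structure. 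One point to make explicit if you write this up: the min--max for the Neumann problem is taken over subspaces of $H^1(1,\delta)$ (the boundary condition is natural), and the strictness of $\lambda_{n,1}>n^2/\delta^2$ uses that the first eigenfunction, being a nontrivial solution of a second-order linear ODE, cannot vanish on a set of positive measure.
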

\begin{proof}
	$\lambda_{n,1}<\lambda_{n,2}<\lambda_{n,3}<...$ comes naturally from the Sturm-Liouville theory. \\
	For the second part, let's prove the following claims:
	\begin{enumerate}
		\item If we denote the smallest positive zeros of $J_n,J_n',Y_n,Y_n'$ as $j_n,j_n',y_n,y_n'$, then we have $n<j_n'<y_n'$.
		\item $J_n'$ and $Y_n'$ have alternating zeros which go to infinity when $x>n$.
	\end{enumerate}
	Proof of claim 1: \cite{watson1995treatise} page 485-487 has shown that $n<j'_n<y_n<j_n$ and $J_n^2+Y_n^2$ is decreasing on the interval $0<x<j_n'$. Hence $$J_n J_n'+Y_n Y_n'\leq 0$$ on this interval. Since $J_n,J_n'>0$ and $Y_n<0$ on this interval, $Y_n'>0$ in the interval. Furthermore, $$x(x Y_n')'=(n^2-x^2)Y_n$$ holds, using $Y_n'(n)>0$ and $Y_n<0$ for $n<x<j_n'$, we derive that $Y_n'(j_n')>0$ hence $n<j_n'<y_n'$.\\
	Proof of Claim 2: We know $J_n$ and $Y_n$ have alternating zeros that go to infinity, in between each consecutive zeros of $J_n$ or $Y_n$ there must be a zero for $J_n'$ or $Y_n'$ hence all the zeros must go to infinity. Furthermore it's straightforward to verify that $Y_n'$ and $J_n'$ satififies:
	$$T''+\frac{x^2-3n^2}{x(x^2-n^2)}T'+\left[1-\frac{1+n^2}{x^2}+\frac{2n^2}{x^2(n^2-x^2)}\right]T=0.$$ Since $Y_n'$ and $J_n'$ has no common zeros (a property from linear independence between $(Y_n, Y_n')$ and $(J_n, J_n')$, $(Y_n', Y_n'')$ and $(J_n', J_n'')$ are independent as well. Hence by the Sturm-Liouville theory and claim 1, they have alternating zeros and all zeros are greater than $n$.\\
	Proof of lemma: We investigate the following quantity: 
	\begin{equation}
	\begin{aligned}
	\left(\frac{J_n'(x)}{Y_n'(x)}\right)'&=\frac{-Y_n''J_n'+J_n''Y_n'}{(Y_n')^2}\\
	&=\frac{1-\frac{n^2}{x^2}}{(Y_n')^2}(Y_n J_n'- J_n Y_n')\\
	&=\frac{2(n^2-x^2)}{\pi x^3 (Y_n')^2}.
	\end{aligned}
	\end{equation}
	This shows, along with claim 1, that the function $\frac{J_n'(x)}{Y_n'(x)}$ first increase and peak at $x=n$ and then decreases to $-\infty$ at $y_n'$. And from $y_n'$ on, by claim 2, this function decrease from $+\infty$ to $-\infty$ at each interval between consecutive zeros of $Y_n'$. \\
	Now the equation \eqref{bessel investigation} can be rewritten as 
	$$\frac{J_n'(\sqrt{\lambda_k})}{Y_n'(\sqrt{\lambda_k})}=\frac{J_n'(\delta \sqrt{\lambda_k})}{Y_n'(\delta \sqrt{\lambda_k})},$$ 
	it then follows naturally that when $\delta$ is close to $1$, $\lambda_{n,1}$ lies in $(n^2/\delta^2,n^2)$ for $n\geq 1$ while $\lambda_{0,1}=0$, hence $\lambda_{n,1}\rightarrow n^2$ when $\delta\rightarrow 1$. While $(\delta-1)\sqrt{\lambda_{n,2}}$ is always greater than the minimum of distance between any two different zeros of $Y_n'$ and $J_n'$ that are smaller than $\delta \sqrt{\lambda_{n,2}}$, this forces $\sqrt{\lambda_{n,2}}$ to go to $\infty$ as $\delta\rightarrow 1$, and the case for any $\lambda_{n,j},j>1$ follows true by the first part of the lemma.
	
\end{proof} 

\begin{hypothesis}
	$\lambda_{n,j}\neq \lambda_{m,k}$ for any $\{n,j\}\neq\{m,k\}$.
\end{hypothesis}
\begin{remark}
	The above hypothesis resembles that of Bourget's hypothesis regarding zeros of Bessel function of the first kind, which was proved to be true  \cite{siegel2014einige}\cite{ashu2013some}. Here the difference is our hypothesis focuses on the zeros of cross products involving the derivatives of the first and the second Bessel functions. 
\end{remark}
Given the above lemma and hypothesis, we can see that the multiplicity of $\lambda_k$ in \eqref{radial eigenvalue equation} is two when $n\geq 1$, and is one if $n=0$. The hypothesis is not needed in the limiting case when $\delta$ is close to one. 

Then the eigenvalue problem \eqref{ep} can be solved using the ansatz \begin{align}\label{ansatz}
e_{k,l}=\begin{pmatrix}
u\\
v\\
\end{pmatrix}=
\begin{pmatrix}
u_{k,l} B_k\\
v_{k,l} B_k 
\end{pmatrix}, \text{ where } l\in \{1,2\}.
\end{align}

Substituting \eqref{ansatz} into the equation \eqref{ep}
, we obtain that 
\begin{align}
\label{uvrelation}
\left\{\begin{array}{ll}
~-\lambda_k u_k+R^2\Big[\frac{\lambda-a}{a+\lambda}u_k
+(a+\lambda)^2v_k\Big]
=\beta(\lambda) u_k,\\
-d\lambda_k v_k-R^2\Big[\frac{2\lambda}{a+\lambda}u_k
+(a+\lambda)^2v_k\Big]=\beta(\lambda) v_k.
\end{array}\right.
\end{align}
From this we can see that $\beta$ satisfies:
\begin{equation}\label{eigenvalue equation}
(\beta-R^2\frac{\lambda-a}{\lambda +a}+\lambda_k)(\beta+R^2(a+\lambda)^2+d\lambda_k)=-2R^4\lambda(a+\lambda).
\end{equation}
Then 

\begin{align}
\beta_{k,1\text{ or } 2}=\frac{1}{2}\Bigg\{&-\Big[\lambda_k-R^2\frac{\lambda-a}{a+\lambda}+d\lambda_k+R^2(a+\lambda)^2
\Big]\\
&\pm\Big[\big(\lambda_k-R^2\frac{\lambda-a}{a+\lambda}-d\lambda_k-R^2(a+\lambda)^2\big)^2-8R^4\lambda(a+\lambda)\Big]^{\frac{1}{2}}\Bigg\}.
\end{align}

Now we proceed to classify the parameters $R,d,a,\lambda$ according to what unstable modes this linear equation generates while omitting the critical cases. We obtain a Proposition as follows:
\begin{theorem}
	\label{parameter thm}
	In what follows, we assume $R,d,a,\lambda$ are all positive numbers. Here $e_{k,l}$ is stable (unstable) means the corresponding eigenvalue $\beta_{k,l}$ has negative (positive) real  parts.
	\begin{enumerate}
		\item If $(\lambda-a)>(\lambda+a)^3$, then the constant eigenvector $e_{0,l}$ is unstable.
		\item If $(\lambda-a)<(\lambda+a)^3$ and $d(\lambda-a)>(a+\lambda)^3+2\sqrt{d}(a+\lambda)^2$, then $e_{k,2}$ is unstable, where $\lambda_k$ lies between:
		\begin{equation}
		\frac{R^2}{2d}\left[d\frac{\lambda-a}{\lambda+a}-(\lambda+a)^2\pm\sqrt{\left[d\frac{\lambda-a}{\lambda+a}-(\lambda+a)^2\right]^2-4d(\lambda+a)^2} \right]
		\end{equation} 
		and all the other eigenvectors (including $e_{0,l}$) are stable. If no $\lambda_k$ lies in this interval then every eigenvector is stable. 
		\item If $(\lambda-a)<(\lambda+a)^3$ and $d(\lambda-a)<(a+\lambda)^3+2\sqrt{d}(a+\lambda)$, then all the eigenvectors are stable. 
	\end{enumerate}
\end{theorem}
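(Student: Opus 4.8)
The plan is to reduce the spectral analysis to the sign of the trace and determinant of the $2\times 2$ block that the linearized operator induces on each eigenspace of the Laplacian. For each $\lambda_k$, equation \eqref{eigenvalue equation} is the characteristic polynomial $\beta^2-T_k\beta+D_k=0$ of the matrix acting on the pair $(u_k,v_k)$, where I would record the trace and determinant explicitly as
\begin{align*}
T_k &= -(1+d)\lambda_k + R^2\Big[\tfrac{\lambda-a}{a+\lambda}-(a+\lambda)^2\Big],\\
D_k &= d\,\lambda_k^2 + R^2\Big[(a+\lambda)^2-d\tfrac{\lambda-a}{a+\lambda}\Big]\lambda_k + R^4(a+\lambda)^2.
\end{align*}
The Routh--Hurwitz criterion for a $2\times 2$ system states that both roots have negative real part iff $T_k<0$ and $D_k>0$, while a positive real part appears exactly when $T_k>0$ (with $D_k>0$) or when $D_k<0$. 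The whole theorem then amounts to reading off these two inequalities as $\lambda_k$ ranges over the admissible values from Lemma \ref{asymptotic delta}.

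For part (1) I would set $\lambda_0=0$ and compute $D_0=R^4(a+\lambda)^2>0$ directly, so that the constant mode's stability is governed entirely by $T_0=\frac{R^2}{a+\lambda}\big[(\lambda-a)-(a+\lambda)^3\big]$. Hence $T_0>0$ precisely when $(\lambda-a)>(a+\lambda)^3$, in which case $e_{0,l}$ is unstable whether the two roots turn out real or complex. For parts (2) and (3) the assumption $(\lambda-a)<(a+\lambda)^3$ makes the bracket in $T_k$ negative, so $T_k<0$ for every $\lambda_k\geq0$; thus no instability can come from the trace, and in particular no Hopf-type crossing occurs in this regime. Instability can therefore arise only from $D_k<0$, i.e.\ a real eigenvalue crossing zero, which is the Turing/steady-state mechanism.

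The crux is to analyze $D_k$ as a quadratic in $s=\lambda_k$ with positive leading coefficient $d$ and positive constant term $R^4(a+\lambda)^2$. Its two roots have product $R^4(a+\lambda)^2/d>0$, so they share a sign, and their sum is positive exactly when the linear coefficient is negative, i.e.\ when $d(\lambda-a)>(a+\lambda)^3$. Imposing in addition that the discriminant $R^4\{[d\tfrac{\lambda-a}{a+\lambda}-(a+\lambda)^2]^2-4d(a+\lambda)^2\}$ be positive, and choosing the correct sign of the square root (available because the bracket is already positive once $d(\lambda-a)>(a+\lambda)^3$), collapses the two requirements into the single inequality $d(\lambda-a)>(a+\lambda)^3+2\sqrt{d}(a+\lambda)^2$ of part (2). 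Under this condition $D_k(s)$ is negative precisely on the open interval $(s_-,s_+)$ whose endpoints are the two roots displayed in the statement; any $\lambda_k$ landing there yields one positive eigenvalue and an unstable mode, while every other $\lambda_k$ gives $D_k>0$ and hence stability, and since $0<s_-$ the constant mode $e_{0,l}$ in particular remains stable. Part (3) is the complementary regime: negating the part (2) inequality forces either a nonnegative linear coefficient or a negative discriminant, and either way $D_k(s)>0$ for all $s\geq0$, so together with $T_k<0$ every mode is stable.

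The main obstacle I anticipate is the bookkeeping in part (2): one must verify that the discriminant-positivity condition automatically selects the branch with two positive roots rather than two negative ones, confirm that the stated interval endpoints coincide with $s_\pm$ after simplification, and keep straight which of $\beta_{k,1},\beta_{k,2}$ is the root that becomes positive so that the unstable eigenvector is correctly identified as $e_{k,2}$. Everything else reduces to direct sign computations once $T_k$ and $D_k$ are in hand.
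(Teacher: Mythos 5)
Your proposal is correct and follows essentially the same route as the paper: both reduce to the quadratic characteristic equation on each Laplacian eigenspace and read off stability from the sign of its trace ($T_k<0$ iff $(\lambda-a)<(\lambda+a)^3$ for all $\lambda_k\ge 0$) and of its constant term $D_k$, the latter analyzed as a quadratic in $\lambda_k$ with positive leading and constant coefficients. If anything, your treatment of part (3) is slightly more careful than the paper's (which asserts the discriminant is negative, whereas the negation of the part (2) inequality can also leave the linear coefficient nonnegative with a positive discriminant and two negative roots — your case split covers this correctly).
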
 
\begin{proof} 
	\textbf{Case 1:}\\
	The equation \eqref{eigenvalue equation} can be written as:
	\begin{equation}\label{eigenvalue equation expanded}
	\begin{aligned}
	&\beta^2+\{\lambda_k(d+1)-R^2[\frac{\lambda-a}{\lambda+a}-(a+\lambda)^2]\}\beta\\&+d\lambda_k^2+R^2[(a+\lambda)^2-d\frac{\lambda-a}{\lambda+a}]\lambda_k+R^4(a+\lambda)^2=0.
	\end{aligned}
	\end{equation} It's easy to see if $\lambda_k=0$ and $(\lambda-a)>(\lambda+a)^3$ the equation has either two positive roots or two complex roots with positive real parts hence the conclusion. \\
	\textbf{Case 2:} The discriminant for the quadratic function $$d\lambda_k^2+R^2[(a+\lambda)^2-d\frac{\lambda-a}{\lambda+a}]\lambda_k+R^4(a+\lambda)^2$$ viewed as a function of $\lambda_k$ is $$R^4\left[d\frac{\lambda-a}{\lambda+a}-(\lambda+a)^2\right]^2-4d(\lambda+a)^2,$$ hence the second inequality appeared in case 2 guarantees the positiveness of the discriminant, since it also guarantees the coefficient for $\lambda_k$ in the quadratic function to be negative, basic quadratic analysis shows the for $\lambda_k$ between the two (positive) roots appeared in case 2, the constant term in \eqref{eigenvalue equation expanded} is negative hence guarantees a positive real root for \eqref{eigenvalue equation expanded}.\\
	\textbf{Case 3:} The second inequality in case 3 shows the discriminant above is negative hence the constant term in \eqref{eigenvalue equation expanded} is always positive hence no positive real part is possible for any of the modes. 
	
\end{proof}
The parameter space in the case 2 of the above theorem can be shown between the two surfaces (outside the small cylinder and inside the bigger cone) in figure \ref{parameter space}. Case 1 indicates the region inside the cylinder, while case 3 indicates the region outside both the cylinder and the ucone. 

\begin{figure}[htp]
	\centering
	\includegraphics[width=0.9\textwidth]{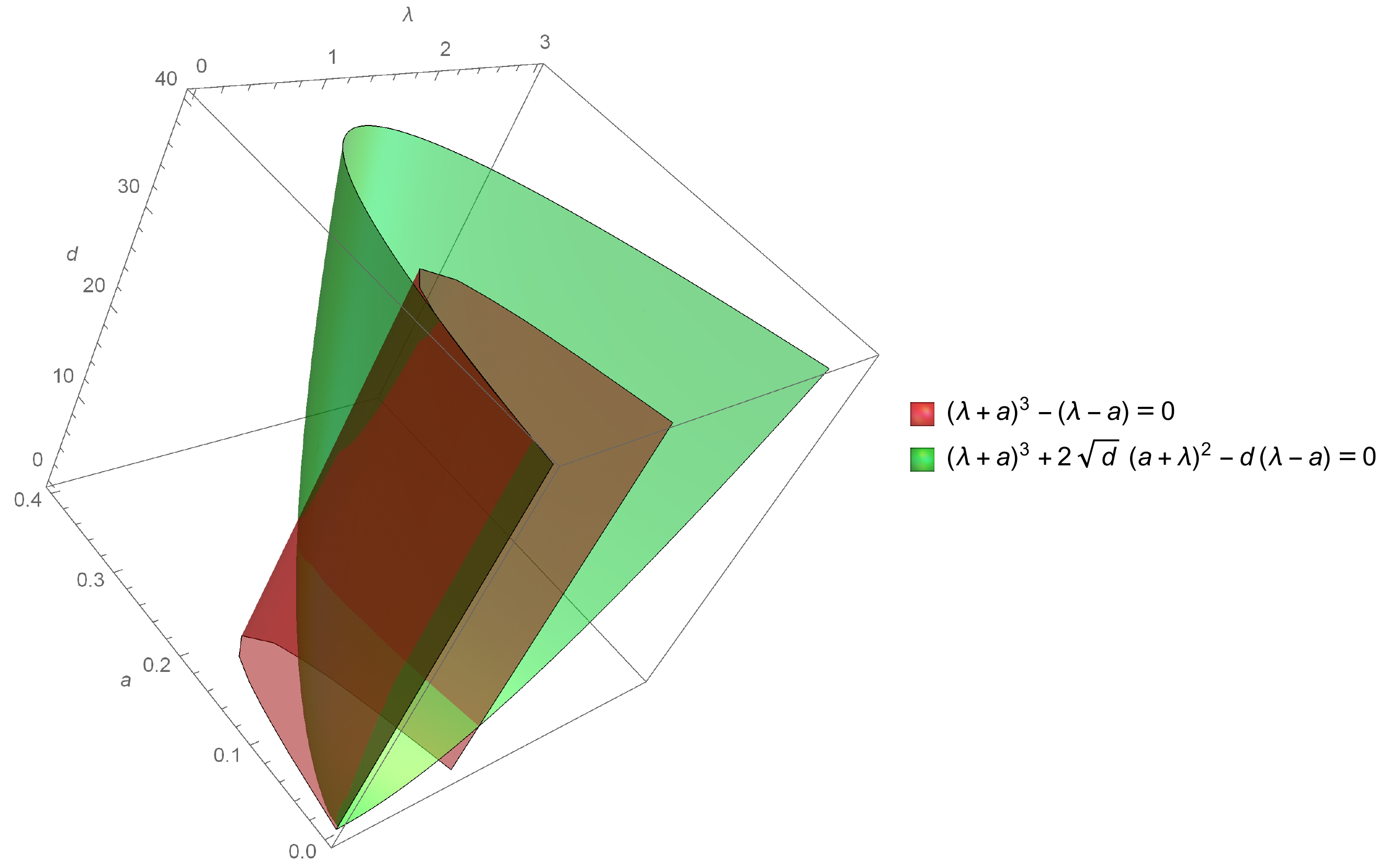}
	\caption{Classification in the parameter space}\label{parameter space}
\end{figure}
By treating $\lambda$ as a changing parameter, which represents the concentration of calcium ions, we can expect that when $\lambda$ increases from region 3 to region 2 in the above theorem, we should have a dynamic transition from a uniform steady state to a spatially variant steady state. We are particularly interested in this case because the other case where region 3 transitioned into region 1 is locally equivalent to an ODE version of \eqref{ep} without the diffusion terms, since all spatial variant modes are stable during the transition. Actually we can prove: 
\begin{lemma}
	If $\Re \beta_{k,l}<0$ for any $k>0$ and $l=1 \text{ or } 2$, than the constant function space $\{U|\nabla U=0 \text{ in } \Omega\}$ is exponentially attracting in a neighborhood of $\mathbf{0}$ in $X^{1/2}$ in equation \eqref{231}. 
\end{lemma}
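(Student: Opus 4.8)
The plan is to exploit the fact that the subspace $E_c=\{U\in X_1:\nabla U=0\}$ of spatially constant fields is a two–dimensional invariant subspace for the full semiflow generated by \eqref{231}, while on its $L^2$–orthogonal complement $E_s=\{U:\int_\Omega U=0\}$ the linearization $L_\lambda$ has spectrum uniformly bounded to the left of the imaginary axis. First I would record the spectral splitting: since the spatial parts $B_k$ of the eigenvectors \eqref{ansatz} are exactly the Neumann Laplacian eigenfunctions, the spectral projection $P_c$ onto $\{\beta_{0,1},\beta_{0,2}\}$ is simply the spatial average and $P_s=I-P_c$ subtracts the mean; both are bounded on $X^{1/2}=H^1(\Omega)^2$. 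Using Lemma \ref{asymptotic delta} the Laplacian eigenvalues $\lambda_k$ accumulate only at $+\infty$, so for $k>0$ only finitely many $\beta_{k,l}$ lie in any right half–plane and the hypothesis $\Re\beta_{k,l}<0$ upgrades to a uniform gap $\sup_{k>0,l}\Re\beta_{k,l}\le-\eta<0$.

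The decisive algebraic observation is the invariance of $E_c$. A constant field has vanishing Laplacian and $B_\lambda$ acts on it by a constant matrix, while the nonlinearity \eqref{22} is built from the pointwise products $u^2,uv,u^2v$, so $G$ maps constants to constants; hence $L_\lambda E_c\subset E_c$ and $G(E_c)\subset E_c$, which gives $P_sG(\xi)=0$ for every $\xi\in E_c$. Writing $U=\xi+w$ with $\xi=P_cU$ and $w=P_sU$, the transverse component solves $w_t=L_\lambda|_{E_s}w+N(\xi,w)$ with $N(\xi,w):=P_sG(\xi+w)$, and the cancellation $N(\xi,0)=0$ forces every monomial of $N$ to carry at least one factor of $w$; since $DG(0)=0$ this yields $\|N(\xi,w)\|_X\le C(\|\xi\|_{X^{1/2}}+\|w\|_{X^{1/2}})\,\|w\|_{X^{1/2}}$ near the origin.

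Next I would close the estimate by the variation–of–constants formula on $E_s$. Because $-A$ is sectorial, $L_\lambda$ generates an analytic semigroup, and the gap gives the smoothing bounds $\|e^{tL_\lambda}P_s\|_{X^{1/2}\to X^{1/2}}\le Me^{-\eta' t}$ and $\|e^{tL_\lambda}P_s\|_{X\to X^{1/2}}\le Mt^{-1/2}e^{-\eta' t}$ for any $\eta'<\eta$. Feeding these and the nonlinear bound into $w(t)=e^{tL_\lambda}w(0)+\int_0^t e^{(t-s)L_\lambda}P_sG(U(s))\,ds$ produces, as long as the trajectory stays in a ball $\{\|U\|_{X^{1/2}}\le\rho\}$, a singular Gronwall (Henry) inequality whose convolution term carries the small prefactor $C\rho$. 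Choosing $\rho$ small enough that $C\rho$ is dominated by $\eta'$ gives $\|P_sU(t)\|_{X^{1/2}}\le M'e^{-\eta'' t}\|P_sU(0)\|_{X^{1/2}}$ for some $0<\eta''<\eta'$, which is precisely the exponential attraction of $E_c$ transverse to itself.

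I expect the main obstacle to be conceptual rather than computational: interpreting \emph{exponentially attracting} correctly when the constant modes $\beta_{0,l}$ themselves may be unstable (Case 1 of Theorem \ref{parameter thm}). In that situation the attraction is transverse—the distance $\|P_sU\|$ to $E_c$ decays while $\xi$ may grow and eventually exit the ball—so the estimate holds on the interval during which $U$ remains in the neighborhood, and the clean statement is that $E_c$ is a locally exponentially attracting invariant manifold. Equivalently one may phrase the whole argument as the reduction principle: $E_c$, being invariant and containing every mode with $\Re\beta\ge-\eta$, is its own local center–unstable manifold, and the standard attracting–manifold theorem in the framework of \cite{ma2014phase} delivers the collapse of the transverse $E_s$ directions together with the promised reduction to the diffusionless ODE on $E_c$. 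The care needed in the singular Gronwall step and in checking boundedness of $P_s$ on $X^{1/2}$ are the only genuinely delicate points.
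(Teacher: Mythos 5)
Your proposal is correct, and it amounts to an explicit, self-contained proof of the special case that the paper disposes of in one line by citing Theorem 6.1.4 of \cite{henry2006geometric} with the constant function space as the invariant manifold. Both arguments rest on the same two pillars: (i) the space $E_c$ of spatially constant fields is invariant under the full flow, because constants are annihilated by $\Delta$, preserved by $B_\lambda$, and mapped to constants by the pointwise-polynomial nonlinearity $G$; and (ii) on the mean-zero complement the linearization has a uniform spectral gap, which you correctly upgrade from the pointwise hypothesis $\Re\beta_{k,l}<0$ using $\lambda_k\to\infty$ (the trace of each $2\times 2$ block tends to $-\infty$ while the determinant tends to $+\infty$, so $\Re\beta_{k,l}\to-\infty$). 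These are exactly the hypotheses one must check before invoking Henry's attracting-invariant-manifold theorem, and the paper leaves that verification implicit; your variation-of-constants plus singular Gronwall argument is essentially the proof of that theorem specialized to the present block-diagonal setting, so what you lose in brevity you gain in transparency. Your closing caveat is also well taken: since the constant modes $\beta_{0,l}$ may themselves be unstable (Case 1 of Theorem \ref{parameter thm}), the correct reading of ``exponentially attracting in a neighborhood'' is transverse attraction for as long as the trajectory remains in the neighborhood, which is precisely the local formulation in Henry and the sense in which the paper uses the lemma to reduce the dynamics to the diffusionless ODE on $E_c$.
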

\begin{proof}
	This is a direct consequence of the Theorem 6.1.4 of \cite{henry2006geometric}. With our invariant manifold being the constant function space.
\end{proof}
For simplicity, we assume $a>\frac{1}{3\sqrt{3}}$. It's easy to verify under this condition, $(\lambda-a)<(\lambda+a)^3$ always hold for any $\lambda>0$. Another simplification we will make is to restrict our transition purely inside region 2 instead of on the boundary of region 2. The reason behind this is the length of window for $\lambda_k$ is close to $0$ near boundary and it is not generic that there is a $\lambda_k$ lies precisely inside this small window. The third simplification we shall make is there is only one (not counting multiplicity) $\lambda_k$ entering the length window when $\lambda$ increase to a critical value. It's easy to see this is the generic case. The double entering case corresponds to boundary lines in Figure \ref{nc plot} later in numerical section hence is negligible. 

And we assert the PES condition under these restrictions as follows: 
\begin{theorem}
	\label{pes theorem}
	Assume $a>\frac{1}{3\sqrt{3}}$ and $d(\lambda-a)>(a+\lambda)^3+2\sqrt{d}(a+\lambda)^2$, if the positive two roots of $d(\lambda-a)=(a+\lambda)^3+2\sqrt{d}(a+\lambda)^2$ are $\lambda_{min}$ and $\lambda_{max}$, we further require $\frac{R^2}{2d}(a+\lambda_{min})\notin L=\{\lambda_k,k\in \mathbb{N}\}$ and  $n(I_\lambda\cap L)=1$ when $\lambda\in(\min \{\lambda, I_\lambda\cap L\neq \emptyset\},\min \{\lambda, I_\lambda\cap L\neq \emptyset\}+\epsilon)$ for a certain $\epsilon>0$ where \begin{equation}
	\label{intervallambda}
	\begin{split}I_\lambda=&\left(\frac{R^2}{2d}\left[d\frac{\lambda-a}{\lambda+a}-(\lambda+a)^2-\sqrt{\left[d\frac{\lambda-a}{\lambda+a}-(\lambda+a)^2\right]^2-4d(\lambda+a)^2} \right]\right.,\\
	&\left.\frac{R^2}{2d}\left[d\frac{\lambda-a}{\lambda+a}-(\lambda+a)^2+\sqrt{\left[d\frac{\lambda-a}{\lambda+a}-(\lambda+a)^2\right]^2-4d(\lambda+a)^2} \right]\right).
	\end{split}
	\end{equation}
	we have the following: (we treat $\lambda$ as our parameter hence the eigenvalues can be written as $\beta_{k,l}(\lambda)$)
	\begin{align}\label{pes}
	\begin{cases}
	\beta_{k_c,2}(\lambda)
	\begin{cases}
	>0,\lambda_c<\lambda<\lambda_c+\epsilon\\
	=0,\quad \lambda=\lambda_c\\
	<0,\quad 0<\lambda<\lambda_c
	\end{cases},\\
	\Re\beta_{m,l}<0,\beta_{k_c,1}<0,\text{ where }m\neq k_c, 0<\lambda<\lambda_c+\epsilon.
	\end{cases}
	\end{align}
	Here $\lambda_c=\min \{\lambda, I_\lambda\cap L\neq \emptyset\}$ and $k_c$ corresponding to the only $\lambda_{k_c}$ that is inside $I_\lambda\cap L$ when $\lambda_c<\lambda<\lambda_c+\epsilon$.
\end{theorem}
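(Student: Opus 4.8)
The plan is to reduce the entire statement \eqref{pes} to the sign of a single quadratic in $\lambda_k$, and then to a geometric question about when the spectrum $L$ first meets the interval $I_\lambda$. Writing the characteristic equation \eqref{eigenvalue equation expanded} as $\beta^2+p(\lambda_k)\beta+q(\lambda_k)=0$, with $p(\lambda_k)=\lambda_k(d+1)-R^2[\frac{\lambda-a}{\lambda+a}-(a+\lambda)^2]$ and $q(\lambda_k)$ the constant term, the first thing I would record is that the standing hypothesis $a>\frac{1}{3\sqrt3}$ forces $(\lambda-a)<(\lambda+a)^3$ for every $\lambda>0$, hence $\frac{\lambda-a}{\lambda+a}-(a+\lambda)^2<0$ and $p(\lambda_k)>0$ for all $\lambda_k\geq 0$. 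By the Routh--Hurwitz criterion for a quadratic this means $\Re\beta_{k,1},\Re\beta_{k,2}<0$ if and only if $q(\lambda_k)>0$, while $q(\lambda_k)<0$ produces exactly one positive real root, which is $\beta_{k,2}$, and one negative real root $\beta_{k,1}$. Thus the whole content of \eqref{pes} collapses to controlling the sign of $q(\lambda_k)$ as $\lambda$ varies.

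Next I would identify $\{q(\lambda_k)<0\}$ with $\{\lambda_k\in I_\lambda\}$. Since $q$ is an upward-opening quadratic in $\lambda_k$ (leading coefficient $d>0$) whose two roots are precisely the endpoints in \eqref{intervallambda} --- exactly the computation already carried out in Case 2 of Theorem \ref{parameter thm} --- we have $q(\lambda_k)<0\iff\lambda_k\in I_\lambda$ and $q(\lambda_k)=0\iff\lambda_k\in\partial I_\lambda$. I would then describe the family $\{I_\lambda\}$: at $\lambda=\lambda_{min}$ the discriminant under the root in \eqref{intervallambda} vanishes, so $I_{\lambda_{min}}$ degenerates to the single point $c_{min}$, while for $\lambda\in(\lambda_{min},\lambda_{max})$ the interval is genuinely open and widens. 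The genericity assumptions --- $c_{min}\notin L$ together with $n(I_\lambda\cap L)=1$ on $(\lambda_c,\lambda_c+\epsilon)$ --- guarantee that as $\lambda$ increases the set $L$ first meets $I_\lambda$ at the single, isolated value $\lambda_c=\min\{\lambda:I_\lambda\cap L\neq\emptyset\}$, through exactly one eigenvalue $\lambda_{k_c}$.

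With these two reductions the PES \eqref{pes} assembles directly. For $0<\lambda<\lambda_c$ no $\lambda_k$ lies in $I_\lambda$, so $q(\lambda_k)>0$ for every $k$ and all modes are stable; in particular $\Re\beta_{k_c,2}<0$ and $\beta_{k_c,1}<0$. At $\lambda=\lambda_c$ one has $\lambda_{k_c}\in\partial I_\lambda$, hence $q(\lambda_{k_c})=0$; since $q=0$ makes the $\beta$-discriminant equal to $p(\lambda_{k_c})^2>0$, the two roots are real, one of them $0$ (namely $\beta_{k_c,2}$) and the other $-p(\lambda_{k_c})<0$. For $\lambda\in(\lambda_c,\lambda_c+\epsilon)$ the assumption $n(I_\lambda\cap L)=1$ says only $\lambda_{k_c}$ has entered $I_\lambda$, so $q(\lambda_{k_c})<0$ gives $\beta_{k_c,2}>0$ and $\beta_{k_c,1}<0$, while $q(\lambda_m)>0$ for all $m\neq k_c$ keeps $\Re\beta_{m,l}<0$. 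This is exactly the sign pattern claimed in \eqref{pes}.

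The step I expect to be the main obstacle is the transversality at $\lambda_c$: upgrading $\beta_{k_c,2}(\lambda_c)=0$ to a genuine sign change rather than a mere tangency, i.e. confirming that $\lambda_{k_c}$ actually passes into the interior of $I_\lambda$ as $\lambda$ increases through $\lambda_c$. The plan is to show that the relevant endpoint of $I_\lambda$ sweeps across the fixed value $\lambda_{k_c}$ with nonzero speed, equivalently that $\frac{\partial}{\partial\lambda}q(\lambda_{k_c},\lambda)\big|_{\lambda=\lambda_c}<0$, by differentiating the explicit $\lambda$-dependence of the coefficients of $q$. The hypothesis $c_{min}\notin L$ is precisely what rules out the degenerate case: it forces $\lambda_c>\lambda_{min}$ strictly, so at $\lambda_c$ the interval has already opened past its turning configuration and its endpoints are strictly monotone in $\lambda$, making the crossing transversal. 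Once the sign of this $\lambda$-derivative is pinned down, the first line of \eqref{pes} --- positivity for $\lambda_c<\lambda<\lambda_c+\epsilon$ and negativity just below --- follows, completing the proof.
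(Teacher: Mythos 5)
Your argument is correct and is essentially the one the paper itself relies on: Theorem \ref{pes theorem} is stated without a separate proof precisely because it follows from the sign analysis of the constant term $q(\lambda_k)$ of the characteristic quadratic \eqref{eigenvalue equation expanded} already carried out in the proof of Theorem \ref{parameter thm} (positivity of the linear coefficient under $a>\frac{1}{3\sqrt{3}}$, Routh--Hurwitz, and the identification of the roots of $q$ with the endpoints of $I_\lambda$), which is exactly your reduction. The only remark worth adding is that your final ``transversality'' paragraph is superfluous: the hypothesis $n(I_\lambda\cap L)=1$ on $(\lambda_c,\lambda_c+\epsilon)$ already places $\lambda_{k_c}$ in the open interval $I_\lambda$ for those $\lambda$, so $q(\lambda_{k_c})<0$ and hence $\beta_{k_c,2}>0$ follow directly, with no need to compute $\partial q/\partial\lambda$ (whose sign is in any case not obviously negative without invoking that same hypothesis).
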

\begin{remark}
	\label{remark on multiplicity}
	Notice in the above theorem the multiplicity of $\beta_{k_c,2}$ depends on the underlying $\lambda_{k_c}$. If the corresponding $n_c$ (defined to be the $n$ in the critical eigenvector $R(r)\cos{n\theta}$ or $R(r)\sin{n\theta}$) is $0$ then the multiplicity is $1$, if $n_c>0$ then the multiplicity is $2$. When $\delta\rightarrow 1$, according to Lemma \ref{asymptotic delta} and boundedness of $I_\lambda$ when $\lambda$ satisfies the restrictions in Theorem \ref{pes theorem}, only multiplicity $2$ transition will occur. 
\end{remark}

\section{Dynamical transition theorems}
Suppose in this section the eigenvectors can be represented as 
\begin{align*}
e_{n j l c}=\begin{pmatrix}u_{n j l}\\v_{n j l}\end{pmatrix} \cos{n \theta} R_{n j}(r),\quad 
e_{n j l s}=\begin{pmatrix}u_{n j l}\\v_{n j l}\end{pmatrix} \sin{n \theta} R_{n j}(r),
\end{align*}
for $n>1,j\in \mathbb{Z}^+,l=1\text{ or }2$, and 
$$  e_{0 j l}=\begin{pmatrix}u_{0 j l}\\v_{0 j l}\end{pmatrix}R_{0 j}(r)$$ for $n=0$, and all the other quantities are denoted in the same fashion. (for example $\beta_{njl}$ to denote the eigenvalues). Here we assume all the eigenvectors are normalized, i.e. $\int _\Omega |e_{\text{sub}}|^2=1$ for any subscripts, and $e_{nj1c}=\overline{e_{nj2c}}$, $e_{nj1c}=\overline{e_{nj2c}}$ and $e_{0j1}=\overline{e_{0j2}}$ for complex eigenvectors, otherwise all eigenvectors are real. 
Moreover we can standardize the choice of $u_{njl}$ and $v_{njl}$ so that they have the following properties given the restrictions in the PES:
\begin{equation}
\begin{aligned}
&u_{nj1},u_{nj2}>0, u_{nj1}=u_{nj2}\text{ when } v\text{ is not real,}\\
&\Re v_{njl}<0,\\
&\pi  (|u_{njl}|^2+|v_{njl}|^2)\int_1^\delta R_{nj}^2 r\mathrm{d}r=1.
\end{aligned}
\end{equation} 
The only non-trivial part of the above properties is the second one, which can be derived from \eqref{uvrelation}, namely, 
\begin{equation*}
\frac{-\lambda_{nj}u_{nj}+R^2\left[\frac{\lambda-a}{\lambda+a}u_{nj}+(a+\lambda)^2 v_{nj}\right]}{u_{nj}}
=\frac{-d\lambda_{nj}v_{nj}-R^2\left[\frac{2\lambda}{\lambda+a}u_{nj}+(a+\lambda)^2 v_{nj}\right]}{v_{nj}}.
\end{equation*}
Hence if we denote $p_l=v_{njl}/u_{njl}$, then it's easy to derive
\begin{equation*}
\begin{aligned}
p_1 \cdot p_2&= \frac{2\lambda}{(a+\lambda)^3}>0,\\
p_1+p_2&=-\frac{1}{R^2(a+\lambda)^2}\left[R^2\frac{\lambda-a}{\lambda+a}+R^2(a+\lambda)^2+(d-1)\lambda_{nj}\right]<0,
\end{aligned}
\end{equation*}
then the properties follows. See Theorem 2.1.3 of \cite{ma2014phase} for a classification of dynamical transition into three types. 

Here we first define the following quantity $q(\lambda)$:
\begin{equation*}
\begin{aligned}
q(\lambda)=&\sum_{j\in \mathbb{Z}^+, l\in{1,2}}(B_{0jl}+\frac{1}{4}B_{2n_c j l})+\frac{3\pi}{4}\gamma_3u_{n_c j_c 2}^2 v_{n_c j_c 2} \int_1^\delta R_{n_c j_c}^4(r) r\mathrm{d}r,\\
B_{njl}=&-\frac{\pi^2}{\beta_{njl}}(\overline{u_{njl}}-\overline{v_{njl}})(2\gamma_2 u_{n_c j_c 2}u_{njl}+\gamma_{12}u_{n_c j_c 2}v_{njl}+\gamma_{12} v_{n_c j_c 2} u_{njl})\cdot\\
&(\gamma_2 u_{n_c j_c l}^2+\gamma_{12} u_{n_c j_c 2} v_{n_c j_c 2})(\int_1^\delta R_{n_c j_c}^2 R_{nj} r\mathrm{d}r)^2.
\end{aligned}
\end{equation*}
 Recall $n_c$ is defined to be the $n$ in the critical eigenvector $R(r)\cos{n\theta}$ or $R(r)\sin{n\theta}$. Notice that $B_{nj1}=\overline{B_{nj2}}$ for non-real eigenvectors hence $q(\lambda)$ is a real number. 
\begin{theorem}
	\label{transition theorem 2d}
	Under the conditions in Theorem \ref{pes theorem} and assume $n_c\neq 0$, then we have:
	\begin{enumerate}
		\item When $q(\lambda_c)<0$, the problem \eqref{231} undergoes a type I (continuous) transition near $\lambda_c$. And the system bifurcates from $0$ on $\lambda>\lambda_c$ to an attractor that's homologically equivalent to a circle. 
		\item When $q(\lambda_c)>0$, the problem \eqref{231} undergoes a type II
		(Catastrophic) transition near $\lambda_c$.
	\end{enumerate}
	
\end{theorem}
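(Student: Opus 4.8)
The plan is to reduce the infinite-dimensional system \eqref{231} to its two-dimensional center manifold and read off the transition type from the sign of the leading nonlinear coefficient. By Theorem \ref{pes theorem} and Remark \ref{remark on multiplicity}, at $\lambda=\lambda_c$ the eigenvalue $\beta_{n_c j_c 2}$ crosses zero as a \emph{real} eigenvalue of geometric multiplicity two, the critical eigenspace being $E_c=\operatorname{span}\{e_{n_c j_c 2 c},\,e_{n_c j_c 2 s}\}$, while every other mode stays strictly stable for $\lambda$ near $\lambda_c$. First I would invoke the center manifold theorem of Ma \& Wang \cite{ma2014phase} to obtain a reduced flow on $E_c$; writing $U=x\,e_{n_c j_c 2 c}+y\,e_{n_c j_c 2 s}+\Phi(x,y,\lambda)$ with $\Phi$ the quadratic-order center manifold function, and projecting \eqref{231} onto $E_c$, I obtain a planar system whose linear part is $\beta_{n_c j_c 2}(\lambda)\,\mathrm{Id}$.

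Next I would make the reduction explicit. Since $G=G_2+G_3$ splits into its quadratic and cubic parts \eqref{22}, the center manifold satisfies $\Phi=-L_\lambda^{-1}P_s\,G_2(x\,e_c+y\,e_s)+O(|(x,y)|^3)$, where $e_c,e_s$ abbreviate the two critical eigenvectors and $P_s$ projects onto the stable modes. The quadratic form $G_2$ applied to $(x\cos n_c\theta+y\sin n_c\theta)R_{n_c j_c}$ produces, through the identities for $\cos^2,\sin^2,\cos\sin$, only the angular sectors $n=0$ and $n=2n_c$; expanding the radial profile $R_{n_c j_c}^2$ in the bases $\{R_{0j}\}$ and $\{R_{2n_c j}\}$ introduces the overlap integrals $\int_1^\delta R_{n_c j_c}^2 R_{nj}\,r\,dr$, while applying $L_\lambda^{-1}$ brings in the resolvent factors $1/\beta_{njl}$. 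Substituting $\Phi$ back into the quadratic term and adding the direct contribution of $G_3$, then projecting onto $E_c$ with the normalizations fixed just before the statement, yields exactly the two families $B_{0jl}$ and $\tfrac14 B_{2n_c jl}$ together with the genuine cubic term $\tfrac{3\pi}{4}\gamma_3 u_{n_c j_c 2}^2 v_{n_c j_c 2}\int_1^\delta R_{n_c j_c}^4 r\,dr$; their sum is the coefficient $q(\lambda)$.

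The structural fact that collapses the computation to a single scalar is the $O(2)$ symmetry of the annulus: rotation $\theta\mapsto\theta+\alpha$ acts on $(x,y)$ as the planar rotation by $n_c\alpha$, and reflection $\theta\mapsto-\theta$ as $(x,y)\mapsto(x,-y)$. Writing $z=x+iy$, equivariance forces the reduced equation into the amplitude form
\begin{equation*}
\frac{dz}{dt}=\beta_{n_c j_c 2}(\lambda)\,z+q(\lambda)\,z|z|^2+o(|z|^3),
\end{equation*}
so all would-be cross terms and imaginary cubic coefficients vanish and $q(\lambda)$ is real, consistent with the pairing $B_{nj1}=\overline{B_{nj2}}$. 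In polar coordinates this is $\dot\rho=\beta_{n_c j_c 2}\rho+q\rho^3$, with the phase $\varphi$ drifting only at higher order. For $q(\lambda_c)<0$ and $\lambda>\lambda_c$ (so $\beta_{n_c j_c 2}>0$ by the PES) the amplitude equation possesses the stable circle $\rho=\sqrt{-\beta_{n_c j_c 2}/q}$ of bifurcated steady states, giving a supercritical (Type I) transition to an attractor homologically equivalent to a circle; for $q(\lambda_c)>0$ no bifurcated stable state exists on $\lambda>\lambda_c$, forcing a catastrophic (Type II) transition. The conclusion then follows from the three-type classification, Theorem 2.1.3 of \cite{ma2014phase}.

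The main obstacle is the bookkeeping of the center manifold step: correctly enumerating every stable mode excited by the quadratic nonlinearity in the $n=0$ and $n=2n_c$ sectors, tracking the radial overlap integrals together with the resolvent weights $1/\beta_{njl}$, and verifying that reassembly yields precisely the stated $q(\lambda)$ with the angular factors $1$ and $\tfrac14$. A secondary care point is justifying that the higher-order remainder does not affect the sign-determined conclusions near $\lambda_c$, which is exactly where the nondegeneracy $q(\lambda_c)\neq 0$ and the $O(2)$ normal form are used.
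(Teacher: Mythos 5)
Your proposal follows essentially the same route as the paper: a center manifold reduction in which the quadratic nonlinearity excites only the $n=0$ and $n=2n_c$ sectors, reassembly of the overlap integrals and resolvent weights into $q(\lambda)$, and a sign analysis of the resulting cubic amplitude equation (the paper phrases the last step as an energy estimate plus Theorems 2.2.11 and 2.4.16 of Ma--Wang, where you invoke $O(2)$ equivariance and the explicit circle of steady states, but these are the same computation). One small bookkeeping point: in the paper the cubic coefficient of the reduced equation is $q(\lambda)(u_{n_c j_c 2}-v_{n_c j_c 2})$ rather than $q(\lambda)$ itself, and the positivity of $u_{n_c j_c 2}-v_{n_c j_c 2}$ (from the normalization fixed before the statement) is what lets the sign of $q(\lambda_c)$ alone decide the transition type.
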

\begin{proof}
	Suppose the crossing eigenvectors are:
	\begin{align*}
	e_c=\begin{pmatrix}u_{n_c j_c 2}\\v_{n_c j_c 2}\end{pmatrix} \cos{n_c \theta} R_{n_c j_c}(r),\quad 
	e_s=\begin{pmatrix}u_{n_c j_c 2}\\v_{n_c j_c 2}\end{pmatrix} \sin{n_c \theta} R_{n_c j_c}(r). 
	\end{align*}
	
	By noting the existence of a center manifold in the neighborhood of origin, the projection of the center manifold onto the space $\{y_c e_c+y_s e_s\}$ can be written as follows:
	\begin{equation}
	\label{reduced raw}
	\begin{aligned}
	\frac{\mathrm{d}y_c}{\mathrm{d}t}&=\beta_{n_c j_c 2}y_c+
	\int_\Omega G(y_c e_c+y_s e_s+\Phi(y_c,y_s))\cdot e_c\\
	\frac{\mathrm{d}y_s}{\mathrm{d}t}&=\beta_{n_c j_c 2}y_s+
	\int_\Omega G(y_c e_c+y_s e_s+\Phi(y_c,y_s))\cdot e_s,
	\end{aligned}
	\end{equation}
	where $\Phi$ is the local center manifold function from space $\{y_c e_c+y_s e_s\}$ to its complement in $X$. This equation will describe the same dynamic as the center manifold locally around $0$.
	According to Theorem A.1.1 of \cite{ma2014phase}, this center manifold can be approximated as follows:
	\[
	-L_\lambda\Phi(y_c,y_s)=P_2 G_2 (y_c,y_s)+o(2),
	\]
	where $P_2$ is the standard projection onto the complement space, $G_2$ is the quadratic part of $G$, and $o(2)=o(|y_c|^2+|y_s|^2)+O(|\beta_{n_c j_c 2}|(|y_c|^2+|y_s|^2))$.
	
	To get a third order approximation of \eqref{reduced raw}, we only need second order terms in $\Phi$, which is equal to $\Phi_2(y_c,y_s)=-L_\lambda^{-1}P_2G_2(y_c,y_s)$ by the above equation. It will be clear later that the only useful modes in $\Phi_2$ are of follows:
	\begin{enumerate}
		\item Modes $e_{0jl}$ for $j\in \mathbb{Z}^+$ and $l=1,2$ and 
		\begin{equation}
		\label{phi1}
		\begin{aligned}
		\int_\Omega\Phi_2(y_c,y_s)\cdot\overline{e_{0jl}}
		=&-\frac{1}{\beta_{0jl}}\int_\Omega G_2(y_c,y_s)\cdot \overline{e_{0jl}}\\
		=&-\frac{1}{\beta_{0jl}}\int_\Omega
		(\gamma_2 u_{n_c j_c 2}^2+\gamma_{12}u_{n_c j_c 2}v_{n_c j_c 2})(\overline{u_{0jl}}-\overline{v_{0jl}})\\
		&\cdot R_{n_c j_c}^2 R_{0j}(y_c \cos{n_c \theta}+y_s\sin{n_c\theta})^2\\
		=&-\frac{\pi}{\beta_{0jl}}
		(\gamma_2 u_{n_c j_c 2}^2+\gamma_{12}u_{n_c j_c 2}v_{n_c j_c 2})(\overline{u_{0jl}}-\overline{v_{0jl}})\\
		&\cdot \int_1^\delta R_{n_c j_c}^2 R_{0j}r\mathrm{d}r \ (y_c^2+y_s^2).
		\end{aligned}
		\end{equation}
		\item Modes $e_{2n_c jlc}$ and $e_{2n_c jls}$ for $j\in \mathbb{Z}^+$ and $l=1,2$. Using similiar calculation as above we obtain:
		\begin{equation}
		\label{phi2}
		\begin{aligned}
		\int_\Omega\Phi_2(y_c,y_s)\cdot\overline{e_{2n_c jlc}}
		=&-\frac{\pi}{\beta_{2n_cjl}}
		(\gamma_2 u_{n_c j_c 2}^2+\gamma_{12}u_{n_c j_c 2}v_{n_c j_c 2})(\overline{u_{2n_cjl}}-\overline{v_{2n_cjl}})\\
		&\cdot \int_1^\delta R_{n_c j_c}^2 R_{2n_cj}r\mathrm{d}r \ \frac{y_c^2-y_s^2}{2},
		\end{aligned}
		\end{equation}
		and
		\begin{equation}
		\label{phi3}
		\begin{aligned}
		\int_\Omega\Phi_2(y_c,y_s)\cdot\overline{e_{2n_c jls}}
		=&-\frac{\pi}{\beta_{2n_cjl}}
		(\gamma_2 u_{n_c j_c 2}^2+\gamma_{12}u_{n_c j_c 2}v_{n_c j_c 2})(\overline{u_{2n_cjl}}-\overline{v_{2n_cjl}})\\
		&\cdot \int_1^\delta R_{n_c j_c}^2 R_{2n_cj}r\mathrm{d}r \ y_c y_s.
		\end{aligned}
		\end{equation}
	\end{enumerate}
	Now we go back to the equation \eqref{reduced raw}, calculation shows
	\begin{equation}
	\begin{aligned}
	&\int_\Omega G(y_c e_c+y_s e_s+\Phi(y_c,y_s))\cdot e_c\\
	=&\int_\Omega\big\{\gamma_2[R_{n_c j_c}(y_c \cos{n_c\theta}+y_s\sin{n_c\theta})]^2 u_{n_c j_c 2}^2\\
	&+\gamma_{12}[R_{n_c j_c}(y_c \cos{n_c\theta}+y_s\sin{n_c\theta})]^2 u_{n_c j_c 2}v_{n_c j_c 2}\\
	&+\gamma_3[R_{n_c j_c}(y_c \cos{n_c\theta}+y_s\sin{n_c\theta})]^3 u_{n_c j_c 2}^2 v_{n_c j_c 2}\\
	&+2\gamma_2[R_{n_c j_c}(y_c \cos{n_c\theta}+y_s\sin{n_c\theta})] u_{n_c j_c 2}\Phi_u (y_c,y_s)\\
	&+\gamma_{12}[R_{n_c j_c}(y_c \cos{n_c\theta}+y_s\sin{n_c\theta})] u_{n_c j_c 2}\Phi_v(y_c,y_s)\\
	&+\gamma_{12}[R_{n_c j_c}(y_c \cos{n_c\theta}+y_s\sin{n_c\theta})] v_{n_c j_c 2}\Phi_u(y_c,y_s)\big\}\\
	&\cdot \cos{n_c\theta}R_{n_c j_c}(u_{n_c j_c 2}-v_{n_c j_c 2})+o(3),
	\end{aligned}
	\end{equation}
	where $\Phi_u$ and $\Phi_v$ are $u$ and $v$ components of $\Phi_2$, and $o(3)$ is same notation as $o(2)$. It's clear from this equation that only those modes with $2n_c$ and $0$ in $\Phi_2$ wavenumbers are nonzero after integrating. After plug in with \eqref{phi1}, \eqref{phi2} and \eqref{phi3} and notice: 
	\begin{equation*}
	\begin{aligned}
	&\int_0^{2\pi}(y_c \cos{n_c\theta}+y_s\sin{n_c\theta})^2 \cos{n_c \theta}~\mathrm{d}\theta=0\\
	&\int_0^{2\pi}(y_c \cos{n_c\theta}+y_s\sin{n_c\theta})\cos{n_c \theta}~\mathrm{d}\theta=\pi y_c\\
	&\int_0^{2\pi}(y_c \cos{n_c\theta}+y_s\sin{n_c\theta})^3\cos{n_c\theta}~\mathrm{d}\theta=\frac{3\pi}{4}y_c(y_c^2+y_s^2)\\
	&\int_0^{2\pi}(y_c \cos{n_c\theta}+y_s\sin{n_c\theta})\cos{n_c\theta}\cos{2n_c\theta}~\mathrm{d}\theta=\frac{\pi}{2}y_c\\
	&\int_0^{2\pi}(y_c \cos{n_c\theta}+y_s\sin{n_c\theta})\cos{n_c\theta}\sin{2n_c\theta}~\mathrm{d}\theta=\frac{\pi}{2}y_s
	\end{aligned}
	\end{equation*}
	
	we obtain: 
	\begin{equation}
	\label{reduced1}
	\begin{aligned}
	&\int_\Omega G(y_c e_c+y_s e_s+\Phi(y_c,y_s))\cdot e_c\\
	=&\Big[\sum_{j\in \mathbb{Z}^+, l\in{1,2}}(B_{0jl}+\frac{1}{4}B_{2n_c j l})+\frac{3\pi}{4}\gamma_3u_{n_c j_c 2}^2 v_{n_c j_c 2} \int_1^\delta R_{n_c j_c}^4(r) r\mathrm{d}r\Big]\\
	&\cdot(u_{n_c j_c 2}-v_{n_c j_c 2})y_c(y_c^2+y_s^2)+o(3),
	\end{aligned}
	\end{equation}
	where
	\begin{equation*}
	\begin{aligned}
	B_{njl}=&-\frac{\pi^2}{\beta_{njl}}(\overline{u_{njl}}-\overline{v_{njl}})(2\gamma_2 u_{n_c j_c 2}u_{njl}+\gamma_{12}u_{n_c j_c 2}v_{njl}+\gamma_{12} v_{n_c j_c 2} u_{njl})\\
	&\cdot (\gamma_2 u_{n_c j_c l}^2+\gamma_{12} u_{n_c j_c 2} v_{n_c j_c 2})(\int_1^\delta R_{n_c j_c}^2 R_{nj} r\mathrm{d}r)^2
	\end{aligned}
	\end{equation*}
	as is in the statement of the theorem.
	
	Hence the equation \eqref{reduced raw} can be rewritten as (by symmetry between $y_c$ and $y_s$)
	\begin{equation}
	\label{reduced final}
	\begin{aligned}
	\frac{\mathrm{d}y_c}{\mathrm{d}t}&=\beta_{n_c j_c 2}y_c+
	q(\lambda)(u_{n_c j_c 2}-v_{n_c j_c 2})y_c(y_c^2+y_s^2)+o(3)\\
	\frac{\mathrm{d}y_s}{\mathrm{d}t}&=\beta_{n_c j_c 2}y_s+
	q(\lambda)(u_{n_c j_c 2}-v_{n_c j_c 2})y_s(y_c^2+y_s^2)+o(3),
	\end{aligned}
	\end{equation}
	at $\beta_{n_c j_c 2}=0$, $o(3)$ reduce to $o(|y_c|^3+|y_s|^3)$ and energy estimate gives
	\[
	\frac{1}{2}\frac{\mathrm{d}|y|^2}{\mathrm{d}t}=q(\lambda_c)(u_{n_c j_c 2}-v_{n_c j_c 2})|y|^4+o(|y|^4).
	\]
	Since $u_{n_c j_c 2}-v_{n_c j_c 2}$ is always positive by previous discussion, the steady state $0$ is locally asymptotically stable at critical $\lambda$ if $q(\lambda_c)$ is negative, which by Theorem 2.2.11 of \cite{ma2014phase} indicates a type I (continuous) type transition for the original system. If $q(\lambda_c)>0$, then by Theorem 2.4.16 of \cite{ma2014phase}, the system undergoes a type II (catastrophic) transition. 
	
\end{proof}

\tikzset{->-/.style={decoration={
			markings,
			mark=at position #1 with {\arrow{>}}},postaction={decorate}}}

\begin{theorem}
	Under the condition of Theorem \ref{pes theorem}, then if $n_c=0$ and $(\gamma_2 u_c+\gamma_{12}v_c)\int_1^\delta R_c^3 r \mathrm{d}r\neq 0$, (see proof for the notation of $u_c, v_c$ and $R_c$), we have the following assertions:
	\begin{enumerate}
		\item Equation \eqref{231} has a type III (random) transition from $(0,\lambda_c)$. More precisely, there exists a neighborhood $U\subset X$ of $0$ such that $U$ is separated into two disjoint open sets $U_1^\lambda$ and $U_2^\lambda$ by the stable manifold $\Gamma_\lambda$ of $u=0$ such that the local transition structure is as shown below and the following hold:
		\begin{enumerate}
			\item[a.] $U=U_1^\lambda +U_2^\lambda +\Gamma_\lambda$;
			\item[b.] the transition in $U_1^\lambda$ is a type II (catastrophic) transition;
			\item[c.] the transition in $U_2^\lambda$ is continuous. 
		\end{enumerate}
		\begin{tikzpicture}
		\filldraw [->-=.5] (0,0) circle (1pt) --node[below, near start]{$v^\lambda$} (1.5,0) ;
		\draw [->-=.5] (2.5,0) --node[below]{$u=0$} (1.5,0);
		\draw [->-=.5] (0,0) -- (-1,0);
		\draw [->-=.5] (0,1) -- (0,0);
		\draw [->-=.5] (0,-1) -- (0,0);
		\filldraw [->-=.5] (1.5,1) -- (1.5,0)circle(1pt);
		\draw [->-=.5] (1.5,-1) -- (1.5,0);
		\draw[line width=2pt,-stealth](2.8,0)--(3.4,0);
		\node at (0.75,-1.2){$\lambda<\lambda_c$};
		\end{tikzpicture}
		\begin{tikzpicture}
		\filldraw [->-=.5] (0,0)circle(1pt) -- (-1,0);
		\draw [->-=.5] (0,0) --node[below]{$u=0$} (1,0);
		\draw [->-=.5] (0,1) -- (0,0);
		\draw [->-=.5] (0,-1) -- (0,0);
		\draw[line width=2pt,-stealth](1.3,0)--(1.9,0);
		\node at (0,-1.2){$\lambda=\lambda_c$};
		\end{tikzpicture}
		\begin{tikzpicture}
		\filldraw [->-=.5] (0,0) circle(1pt)-- (1.5,0);
		\filldraw [->-=.5] (2.5,0) -- node[near end, below]{$v^\lambda$}(1.5,0)circle(1pt);
		\draw [->-=.5] (0,0)node[anchor=north west]{$u=0$} -- (-1,0);
		\draw [->-=.5] (0,1) -- (0,0);
		\draw [->-=.5] (0,-1) -- (0,0);
		\draw [->-=.5] (1.5,1) -- (1.5,0);
		\draw [->-=.5] (1.5,-1) -- (1.5,0);
		\node at (0.75,-1.2){$\lambda>\lambda_c$};
		\node at (0.75,0.5){$U_2^\lambda$};
		\node at (-0.5,0.5){$U_1^\lambda$};
		\end{tikzpicture}
		\item Equation \eqref{231} bifurcates in $U_2^\lambda$ to a unique singular point $v^\lambda$ on $\lambda>\lambda_c$ that is an attractor such that for every $\phi\in U_2^\lambda$, $\lim_{t\rightarrow\infty}||u(t,\phi)-v^\lambda||=0$.
		\item Equation \eqref{231} bifurcates on $\lambda<\lambda_c$ to a unique saddle point $v^\lambda$ with Morse index one.
		\item The bifurcated singular point $v^\lambda$ can be expressed as 
		\begin{equation*}
		v^\lambda=-\frac{\beta_{0j_c 2}(\lambda)}{2\pi (u_c-v_c)u_c(\gamma_2 u_c+\gamma_{12}v_c)\int_1^\delta R_c^3 r \mathrm{d}r}e+o(|\beta_{0j_c 2}|),
		\end{equation*}
	\end{enumerate}
\end{theorem}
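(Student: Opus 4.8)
The plan is to carry out the same center-manifold reduction as in the proof of Theorem \ref{transition theorem 2d}, but now exploiting that $n_c=0$ makes the critical mode simple (Remark \ref{remark on multiplicity}) and radially symmetric, so the reduction is to a \emph{scalar} ODE. Writing the single crossing eigenvector as $e=\begin{pmatrix}u_c\\v_c\end{pmatrix}R_c(r)$ with $u_c=u_{0j_c2}$, $v_c=v_{0j_c2}$, $R_c=R_{0j_c}$, I would represent the center manifold as $y\,e+\Phi(y)$ with $\Phi(y)=O(y^2)$ and project the evolution equation onto $e$ using the reduction convention and approximation formula (Theorem A.1.1 of \cite{ma2014phase}) already employed for Theorem \ref{transition theorem 2d}. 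The crucial structural difference from that proof is that there the angular integral $\int_0^{2\pi}\cos^2(n_c\theta)\cos(n_c\theta)\,\mathrm{d}\theta$ killed the quadratic self-interaction, forcing one to the cubic order; here the angular factor is constant and $\int_0^{2\pi}\mathrm{d}\theta=2\pi\neq0$, so the quadratic term survives and governs the transition.

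The key computation is therefore the leading quadratic coefficient. Substituting $u=y\,u_cR_c$, $v=y\,v_cR_c$ into the quadratic part $G_2$ and using the $(g,-g)$ structure of $G$, the projection onto $e$ produces the factor $(u_c-v_c)$ times the self-interaction $\gamma_2u_c^2+\gamma_{12}u_cv_c=u_c(\gamma_2u_c+\gamma_{12}v_c)$; the correction $\Phi$ only feeds into cubic and higher order and so may be ignored. This yields the normal form
\begin{equation*}
\frac{\mathrm{d}y}{\mathrm{d}t}=\beta_{0j_c2}(\lambda)\,y+a_2\,y^2+o(y^2),\qquad
a_2=2\pi(u_c-v_c)u_c(\gamma_2u_c+\gamma_{12}v_c)\int_1^\delta R_c^3\,r\,\mathrm{d}r .
\end{equation*}
The hypothesis $(\gamma_2u_c+\gamma_{12}v_c)\int_1^\delta R_c^3\,r\,\mathrm{d}r\neq0$, together with $u_c>0$ and $u_c-v_c>0$ from the PES normalization, guarantees $a_2\neq0$.

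Since the leading nonlinearity is of even order with nonzero coefficient, the scalar equation is the standard transcritical normal form, which by the transition classification of \cite{ma2014phase} (cf.\ Theorem 2.1.3 and the mixed-transition analysis) produces a type III (random) transition. I would then read off the remaining assertions directly from $\dot y=\beta_{0j_c2}y+a_2y^2$: besides $y=0$ there is a branch $y^\ast=-\beta_{0j_c2}/a_2$ present on both sides of $\lambda_c$, whose center-direction linearization equals $-\beta_{0j_c2}$. For $\lambda>\lambda_c$ this branch is stable and, lying on one side of the stable manifold $\Gamma_\lambda$ of the origin, gives the attractor $v^\lambda$ in $U_2^\lambda$ (continuous transition), while trajectories in $U_1^\lambda$ escape the neighborhood (catastrophic), matching assertions (a)--(c) and (2); for $\lambda<\lambda_c$ the branch is unstable in the center direction and stable in all others, hence a saddle of Morse index one, giving (3). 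Because $y^\ast=O(\beta_{0j_c2})$ and $\Phi(y^\ast)=O((y^\ast)^2)$, the corresponding full steady state is $v^\lambda=y^\ast e+o(|\beta_{0j_c2}|)$, which upon inserting $a_2$ is exactly the expression in (4).

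The main obstacle here is not analytic depth but careful bookkeeping: one must verify that no quadratic contribution is lost or spuriously introduced through $\Phi$ or through the projection convention inherited from Theorem \ref{transition theorem 2d}, confirm that the factor $2\pi$ in $a_2$ is consistent with the normalization chosen for the $n=0$ eigenvector, and make sure the reduced-equation branch $y^\ast$ corresponds genuinely (via the center manifold) to a full steady state of \eqref{231}. Once $a_2\neq0$ is established, the entire qualitative picture is a routine reading of the one-dimensional transcritical normal form.
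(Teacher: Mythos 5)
Your proposal is correct and follows essentially the same route as the paper: a center-manifold reduction onto the single radially symmetric crossing mode, observing that the quadratic self-interaction survives the angular integration when $n_c=0$, and arriving at the same scalar normal form with coefficient $2\pi(u_c-v_c)u_c(\gamma_2u_c+\gamma_{12}v_c)\int_1^\delta R_c^3\,r\,\mathrm{d}r$. The paper then simply cites Theorem 2.3.2 of Ma--Wang for all the listed assertions, whereas you additionally sketch how they follow from the transcritical picture; the substance is the same.
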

\begin{proof}
	Here we assume the transitioning mode to be 
	$$e=\begin{pmatrix}u_{0j_c 2}\\v_{0j_c 2}
	\end{pmatrix}R_{0j_c}:=\begin{pmatrix}u_c\\v_c
	\end{pmatrix}R_c,$$then the projection of the center manifold is
	\begin{equation}
	\frac{\mathrm{d}y}{\mathrm{d}t}=\beta_{0j_c 2}y+\int_\Omega G(ye+\Phi(y))\cdot e,
	\end{equation}
	and 
	\begin{equation*}
	\begin{aligned}
	&G(ye+\Phi(y))\\
	=&G\left(yR_{0j_c}\binom{u_c}{v_c}+\binom{\Phi_u}{\Phi_v}\right)\\
	=&\binom{1}{-1}(\gamma_2 R_c^2 u_c^2 y^2+\gamma_{12}R_c^2 u_c v_c y^2)+o(2).
	\end{aligned}
	\end{equation*}
	Hence
	\begin{equation*}
	\begin{aligned}
	\frac{\mathrm{d}y}{\mathrm{d}t}&=\beta_{0j_c 2}y+\int_\Omega (u_c-v_c)(\gamma_2 u_c^2 y^2+\gamma_{12}u_c v_c y^2)R_c^3+o(2)\\
	&=\beta_{0j_c 2}y+2\pi (u_c-v_c)u_c(\gamma_2 u_c+\gamma_{12}v_c)\int_1^\delta R_c^3 r \mathrm{d}r~y^2+o(2),
	\end{aligned}
	\end{equation*}
	then the results follows directly from Theorem 2.3.2 of \cite{ma2014phase}.
\end{proof}

\section{Numerical Investigations}
As is indicated in Remark \ref{remark on multiplicity}, the transition when crossing eigenvalues are of multiplicity 2 is the most probable case when $\delta$ is close to $1$, we are more interested in Theorem \ref{transition theorem 2d}. However the coefficients $p(\lambda_c)$ have to be computed numerically. 

In this section we first show four plots of $n_c$ as a function of $d$ and $a$, with $R=4,8$ and $\delta=1.2$ and $2$. Then we calculate $p(\lambda_c)$ for several selected points, and then show the graph for several critical eigenvectors. All coding are done through Mathematica. We are not varying $R$ too much in our $n_c$ graph because the effect of $R$ on critical eigenvectors are quite clear from the formula of $I_\lambda$ in Theorem \ref{pes theorem}.

\begin{figure}[htp]
	\noindent
	\begin{tikzpicture}
	\node[anchor=south west,inner sep=0](image) at (0,0) {\includegraphics[width=2 in]{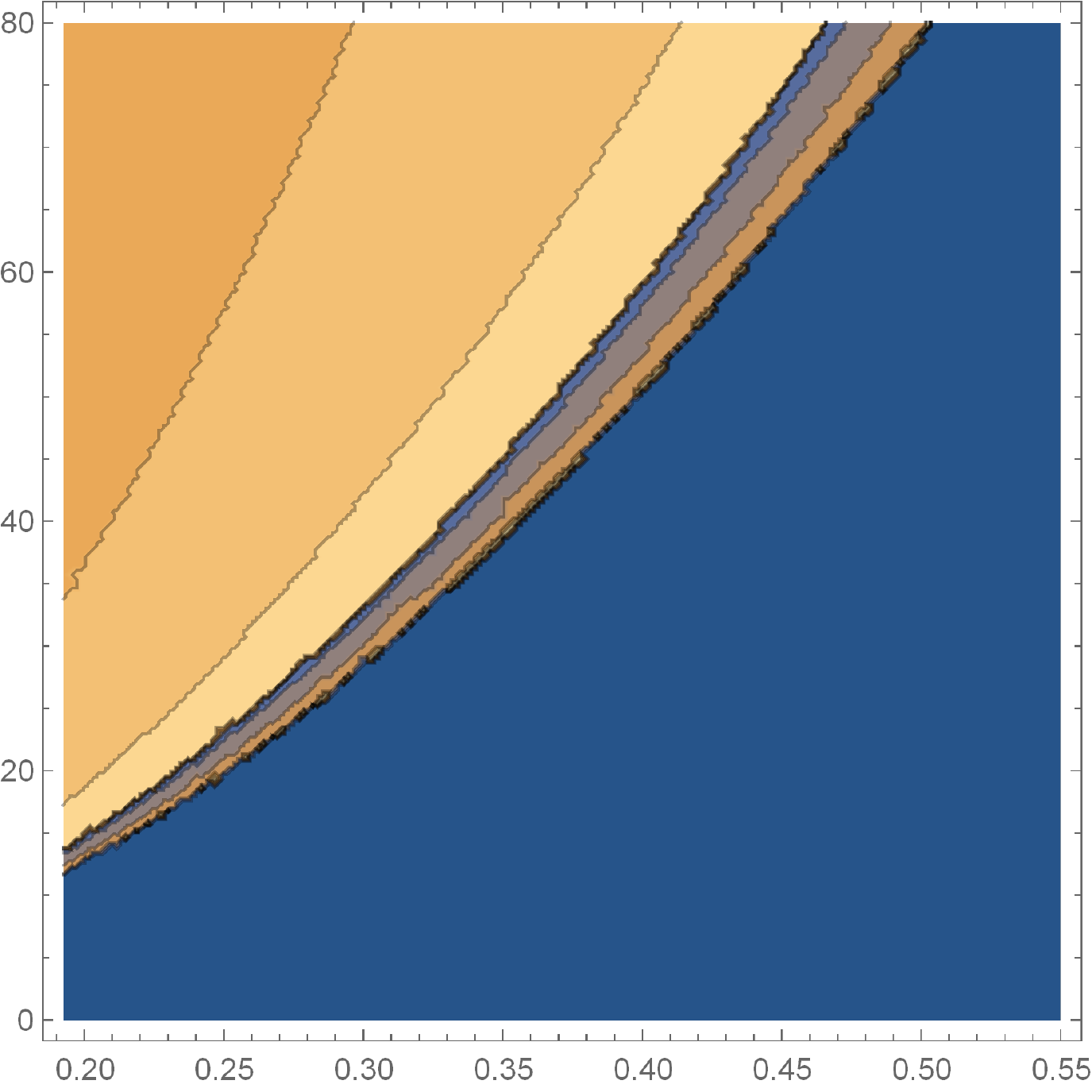}};
	\begin{scope}[x={(image.south east)},y={(image.north west)}]
	
	\node at(0.15,0.85){$3$};
	\node at(0.4,0.85){$4$};
	\node at(0.61,0.85){$5$};
	\node at(0.7,0.4){stable};
	\node at(0.53,-0.03){$a$};
	\node at(-0.03,0.53){$d$};
	\node at(0.35,1.03){$R=8,\delta=2$};
	\draw (0.76,0.978)--(0.75,1.)node[anchor=south]{$0$};
	\draw (0.79,0.978)--(0.79,1.)node[anchor=south]{$1$};
	\draw (0.83,0.978)--(0.83,1.)node[anchor=south]{$2$};
	\draw (0.85,0.978)--(0.86,1.)node[anchor=south]{$6$};
	\end{scope}
	\end{tikzpicture}
	\begin{tikzpicture}
	\node[anchor=south west,inner sep=0](image) at (0,0) {\includegraphics[width=2 in]{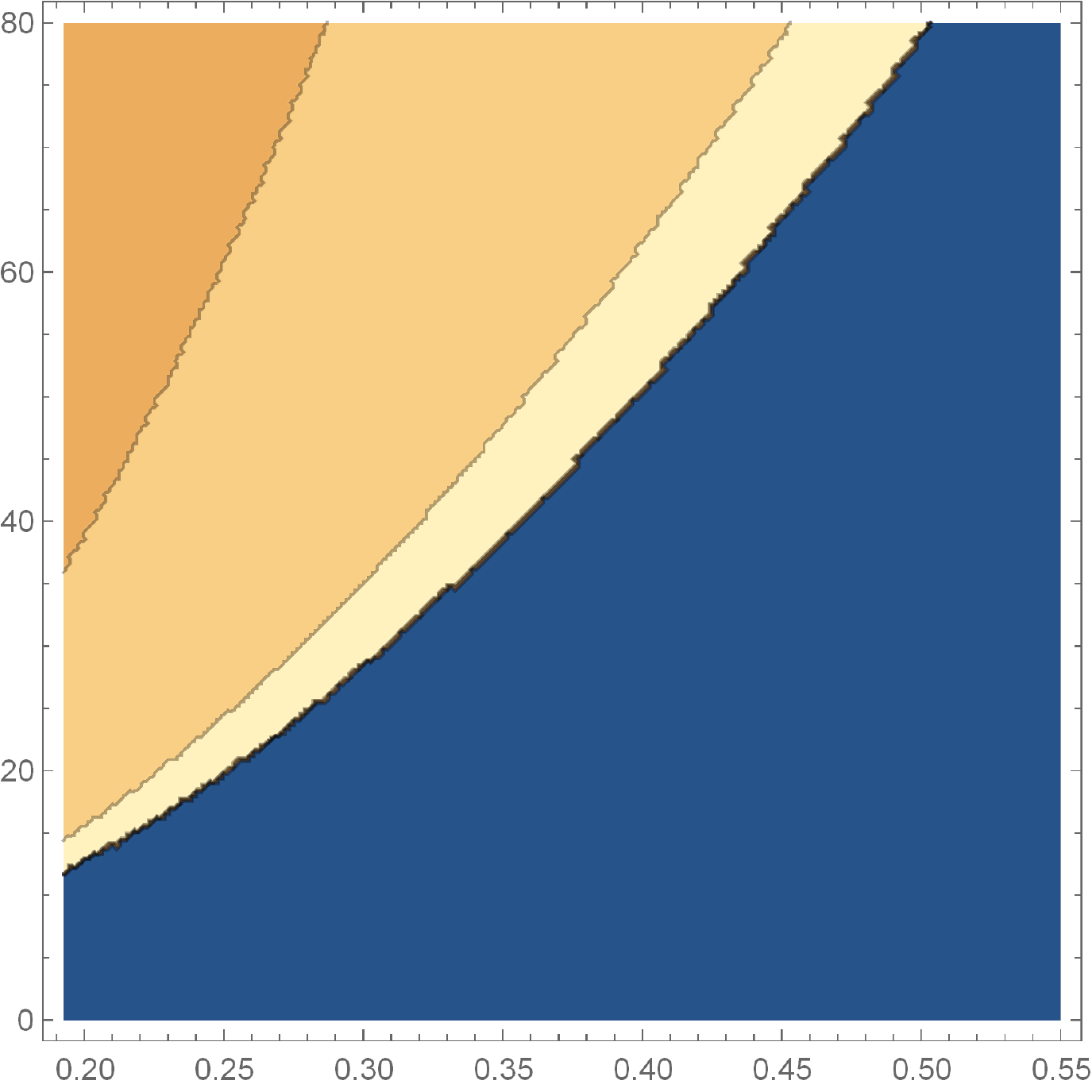}};
	\begin{scope}[x={(image.south east)},y={(image.north west)}]
	\node at(0.15,0.85){$2$};
	\node at(0.45,0.85){$3$};
	\node at(0.7,0.85){$4$};
	\node at(0.7,0.4){stable};
	\node at(0.53,-0.03){$a$};
	\node at(-0.03,0.53){$d$};
	\node at(0.35,1.03){$R=8,\delta=1.2$};
	\end{scope}
	\end{tikzpicture}
	
	\noindent
	\begin{tikzpicture}
	\node[anchor=south west,inner sep=0](image) at (0,0) {\includegraphics[width=2 in ]{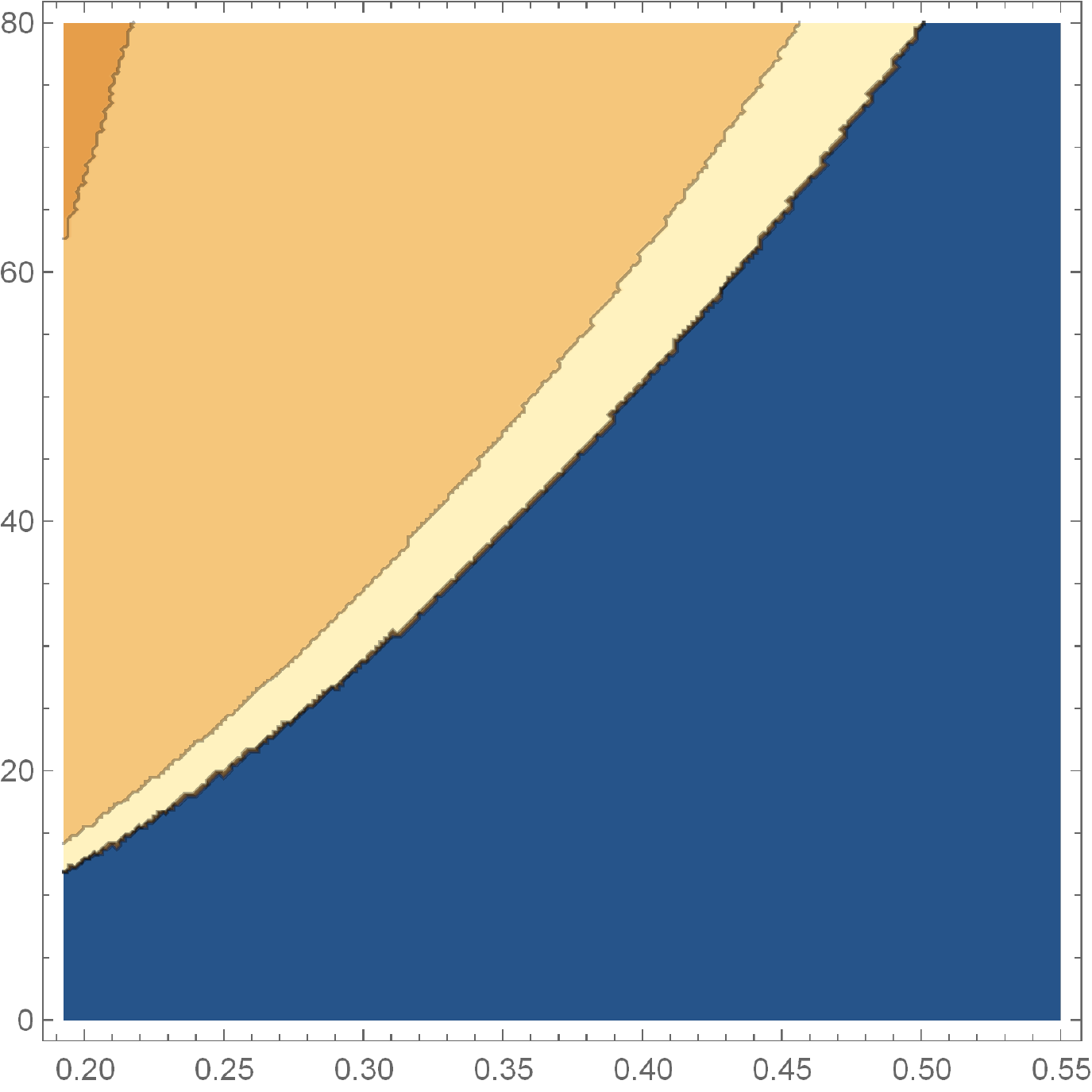}};
	\begin{scope}[x={(image.south east)},y={(image.north west)}]
	\node at(0.08,0.95){$1$};
	\node at(0.36,0.85){$2$};
	\node at(0.7,0.85){$3$};
	\node at(0.7,0.4){stable};
	\node at(0.53,-0.03){$a$};
	\node at(-0.03,0.53){$d$};
	\node at(0.35,1.03){$R=4,\delta=2$};
	\end{scope}
	\end{tikzpicture}
	\begin{tikzpicture}
	\node[anchor=south west,inner sep=0](image) at (0,0) {\includegraphics[width=2 in]{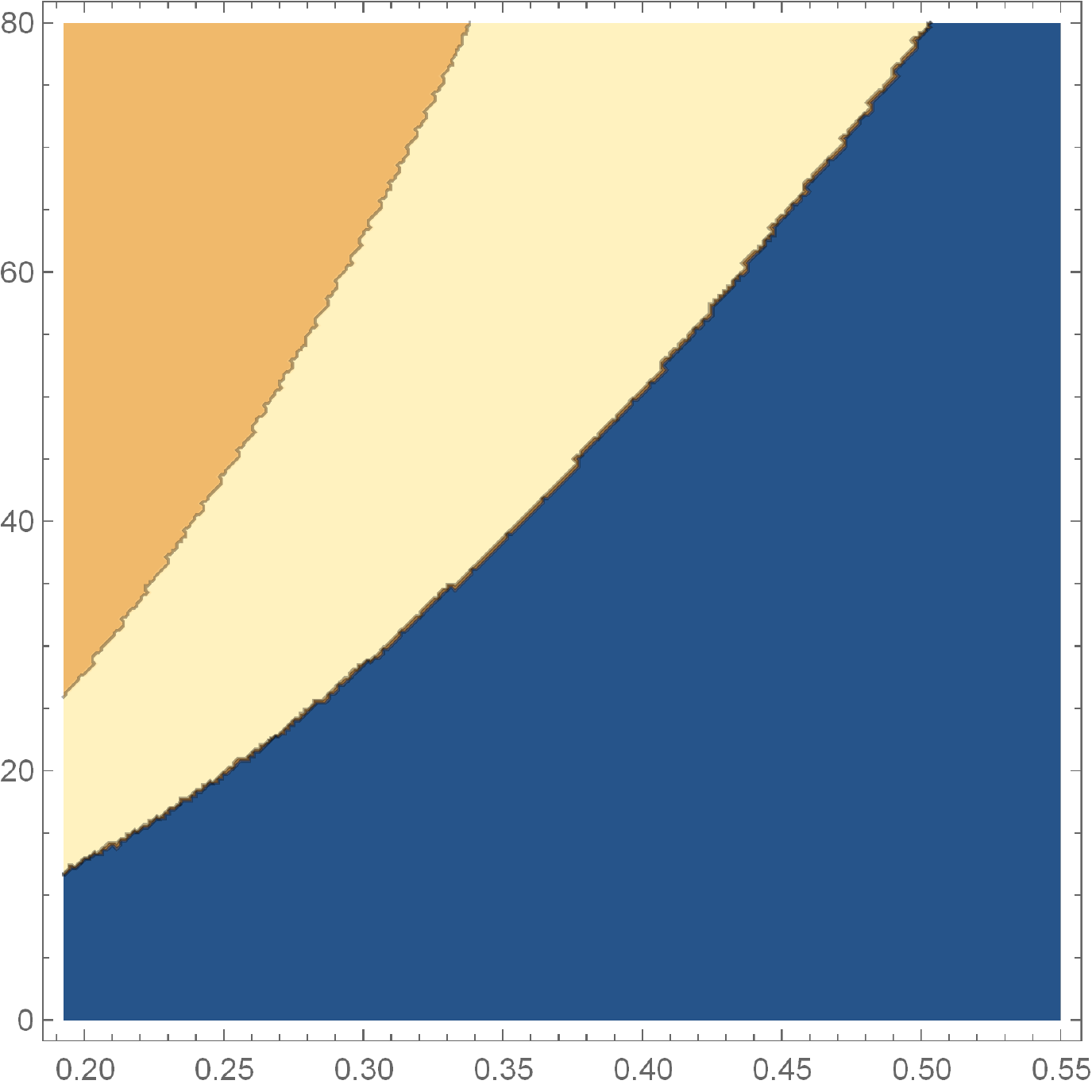}};
	\begin{scope}[x={(image.south east)},y={(image.north west)}]
	
	\node at(0.2,0.85){$1$};
	\node at(0.55,0.85){$2$};
	\node at(0.7,0.4){stable};
	\node at(0.53,-0.03){$a$};
	\node at(-0.03,0.53){$d$};
	\node at(0.35,1.03){$R=4,\delta=1.2$};
	\end{scope}
	\end{tikzpicture}
	\caption{$n_c$ plot against $a$ and $d$, numbers inside indicates critical wave number $n_c$ when $\lambda$ crosses $\lambda_c$}
	\label{nc plot}
\end{figure}

From the above graph we can see that critical wavenumber $n_c$ can change quite erratically when the parameters are near the boundary of the always stable regime. An interesting interpretation is the number of whorl hairs, which can be represented roughly as the number of positive patches of critical eigenfunctions, also behave irregularly. For example when $R=16,~\delta=2~d=80$, if we increase $a$ from $0.2$ to $0.55$, the number of whorl hairs during the initial whorl formation goes through $1,2,3,4,8,5,9,6,10,7,1,11,2,3,8,12,4,5,13$. This shows the importance of distribution of Bessel function zeros since critical eigenvectors are affected by them. 

The following is a chart for $p(\lambda_c)$ for various parameters when the critical wavenumber is not zero.
\begin{table}[htp]
	\centering
	\begin{tabular}{|c|c|c|c|c|c|}
		\hline
		$\delta $&$a$&$d$&$R$&$(n_c,j_c)$&$p(\lambda_c)(u_c-v_c)$ \\
		\hline     
		$1.05$&$0.2$&$13$&$4$&$(2,1)$ &$73.5557$ \\
		\hline     
		$1.05$&$0.2$&$15$&$4$&$(2,1)$ &$147.17$ \\
		\hline
		$1.05$&$0.2$&$20$&$4$&$(2,1)$&$240.462$\\
		\hline     
		$1.05$&$0.2$&$80$&$4$&$( 1,1)$ &$82.1464$ \\
		\hline     
		$1.05$&$0.4$&$65$&$4$&$( 2,1)$ &$79.4266$ \\
		\hline
		$1.05$&$	0.2	$&$13$&$10	$&$(5,1)$&$	459.715$\\
		\hline
		$1.05$&$	0.4	$&$80$&$	10$&$(	4,1)$&$	527.142$\\
		\hline
		$ 1.2$&$0.2$&$15$&$4$&$(2,1	)$&$31.0966$\\
		\hline
		$1.2$&$	0.2$&$	15$&$	20$&$(	9,1)$&$	523.768$\\
		\hline
		$1.2$&$	0.4$&$	175	$&$4$&$(	1,1)$&$	-1.97781$\\
		\hline
		$2$&$	0.2$&$	30$&$	4$&$(	2,1)$&$	5.66053$\\
		\hline
		$2	$&$0.4$&$	60$&$	4$&$(	3,1)$&$	2.0822$\\
		\hline
		$2$&$	0.4	$&$100$&$	4$&$(	2,1)$&$	2.479171$\\
		\hline
		$8$&$	0.4$&$	80$&$	10$&$(	10,5)$&$	6.07011$\\
		
		\hline     
	\end{tabular}
	\caption{Calculation for $p(\lambda_c)$}
	\label{tab:my_label}
\end{table}
Majority of the parameters we have tested show jump transitions. Figure \ref{example eigenvector} shows a plot corresponding to a typical critical eigenvector with $n_c=6,j_c=3$. 
\begin{figure}[htp]
	\centering
	\includegraphics[width=2.5 in]{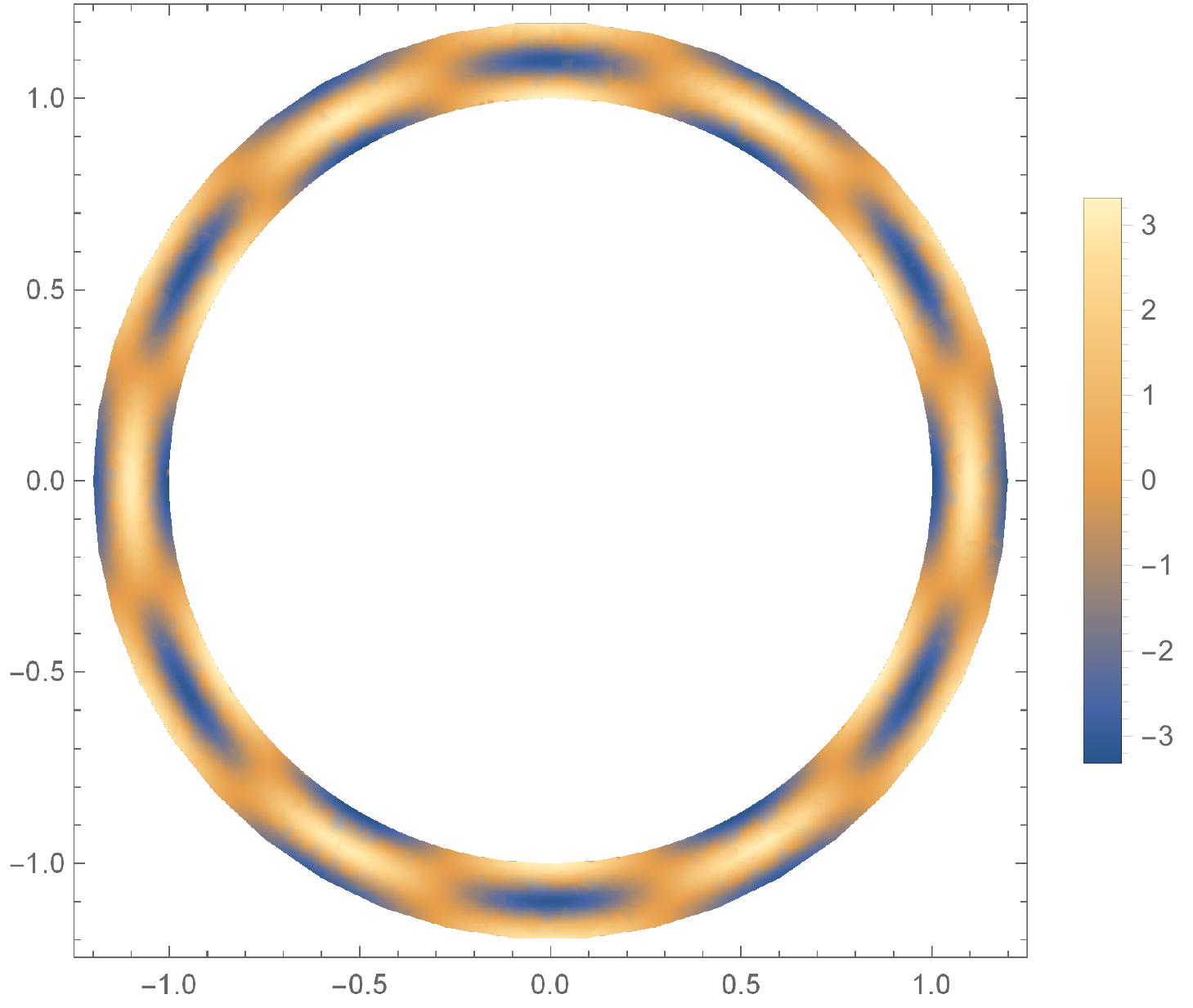}
	\caption{Critical Eigenvector $\cos{n_c\theta}R_{n_c,J_c}(r)$ with $n_c=6, j_c=3$ and $\delta=1.2$, dark or bright regions indicate potential whorl hair growth, in this graph we have around 18 growth regions}
	\label{example eigenvector}
\end{figure}

\section{Conclusions}
We discuss the effect of $\delta,R,\lambda,a,d$ on the system.

From the Remark \ref{remark on multiplicity}, we can see that realistically, when the stem wall of Acetabularia is thin ($\delta$ is small), the crossing modes are two-dimensional and hence Theorem \ref{transition theorem 2d} applies and there is either a jump or continuous type of transition depends on $q(\lambda_c)$. If the stem wall is thick, then a random type of transition can occur and the concentration for substance $u$ or $v$ can display a concentric pattern. 

$R$ affect the general reaction and diffusion rate for both substances, temperature can be a good candidate. From \eqref{intervallambda} and Figure \ref{nc plot}, we can see that greater $R$ (or reaction rate) corresponds to more whorl hair formation because higher wavenumber eigenvectors are involved. This correspond well to the experiment observation from Harrison \cite{harrison1981hair}. 

The effect of $\lambda$ as we are principlely concerned with, is causing a Turing type of transition of the system from a uniform steady state to a spatially variant steady state like those of Figure \ref{example eigenvector}. However, since both continuous and catastrophic type transitions are found from our numerical investigation, the critical eigenvector only faithfully represent the transition state when the type is continuous. For the catastrophic type, we only know the solution will go away from the origin to a global attractor, the precise bifurcated solution is unknown. It is also worth mentioning that $\lambda$ need to be located in a finite interval for the steady state to lose it's stability, as can be seen clearly from Figure \ref{parameter space}.

Parameter $a$ shows the generating rate for substance $u$. Our result shows that when this rate is high, the whole system will be stable and no transition occurs.

The amplitude of $d$ is inversely proportional to the diffusion rate for $u$, by Theorem \ref{parameter thm}, $d$ need to be $>1$ for Turing instability to occur hence $u$ should be a larger molecule than Calcium. Interestingly, our Figure \ref{nc plot} shows that the larger molecule $u$ is, the fewer whorl hair will be generated during transition. 

\section*{Acknowledgments}
The authors are grateful for Prof. Shouhong Wang and anonymous referee for their valuable suggestions. The work of Dongming Yan was supported in part by the China Scholarship Council (CSC).

\providecommand{\href}[2]{#2}
\providecommand{\arxiv}[1]{\href{http://arxiv.org/abs/#1}{arXiv:#1}}
\providecommand{\url}[1]{\texttt{#1}}
\providecommand{\urlprefix}{URL }

\medskip
Received xxxx 20xx; revised xxxx 20xx.
\medskip

\end{document}